\documentclass[a4paper,reqno,12pt]{amsart} 
   %




\usepackage{amsmath} 



\usepackage{amssymb}      


\usepackage{yhmath}
\usepackage{mathdots}
\usepackage{MnSymbol}





\usepackage{amsthm}      

\usepackage{tikz,amsmath}
\usetikzlibrary{arrows}

\usepackage{epsfig}      



\usepackage{graphicx}
\usepackage[normalem]{ulem}
\usepackage[all,line,
]{xy} 
\CompileMatrices 



      



   \newcommand{\Aff}{{\operatorname{Aff}}}
 

   \newcommand{\Hom}{\operatorname{Hom}}

\newcommand{\Ad}{\operatorname{Ad}}

\newcommand{\id}{\operatorname{id}} 
\newcommand{\Aut}{\operatorname{Aut}}

\newcommand{\Tr}{\operatorname{Tr}}

 \newcommand{\supp}{\operatorname{supp}}

\newcommand{\ev}{\operatorname{ev}}







   \theoremstyle{plain}
   \newtheorem{thm}{Theorem}[section]
   \newtheorem{prop}[thm]{Proposition}
   \newtheorem{lemma}[thm]{Lemma}  
   \newtheorem{cor}[thm]{Corollary}
   \theoremstyle{definition}
   
   \newtheorem{defn}[thm]{Definition}
   
   \theoremstyle{remark}

\newtheorem{TOM}[thm]{Property}



\usepackage{mdframed,xcolor}

\definecolor{mybgcolor}{gray}{0.8}
\definecolor{myframecolor}{rgb}{.647,.129,.149}

\mdfdefinestyle{mystyle}{
  usetwoside=false,
  skipabove=0.6em plus 0.8em minus 0.2em,
  skipbelow=0.6em plus 0.8em minus 0.2em,
  innerleftmargin=.25em,
  innerrightmargin=0.25em,
  innertopmargin=0.25em,
  innerbottommargin=0.25em,
  leftmargin=-.75em,
  rightmargin=-0em,
  topline=false,
  rightline=false,
  bottomline=false,
  leftline=false,
  backgroundcolor=mybgcolor,
  splittopskip=0.75em,
  splitbottomskip=0.25em,
  innerleftmargin=0.5em,
  leftline=true,
  linecolor=myframecolor,
  linewidth=0.25em,
}

\newmdenv[style=mystyle]{important}

\usepackage{lipsum}




   \numberwithin{equation}{section}








        \date{\today}

\title[KMS bundles]{On the Bundle of KMS state spaces for flows on a $\mathcal Z$-absorbing C*-algebra}
\author{ George A. Elliott, Yasuhiko Sato, and Klaus Thomsen}



\DeclareMathOperator\coker{coker}

\date{\today}

\address{Department of Mathematics, University of Toronto, Toronto, Ontario,
Canada \ M5S 2E4}
\email{elliott@math.toronto.edu}

\bigskip

\address{Graduate School of Mathematics, Kyushu University, 744 Motoka, Nishi-ku, Fukuoka, Japan}
\email{ysato@math.kyushu-u.ac.jp }

\bigskip

\address{Department of Mathematics, Aarhus University, Ny Munkegade, 8000 Aarhus C, Denmark}

\email{matkt@math.au.dk}

\begin{document}

\maketitle

\section{Introduction} The recent classification results for simple $C^*$-algebras have provided new tools for the construction of flows on $C^*$-algebras with a hitherto unseen richness and complexity in the configuration of KMS state spaces. The basic idea behind the construction of flows with specified KMS behaviour goes back to work by Bratteli, Elliott, Herman, and Kishimoto in \cite{BEH}, \cite{BEK1}, and \cite{BEK2} where such flows were constructed on various simple unital $C^*$-algebras. By using the classification results it is now possible to construct such flows on many given $C^*$-algebras. This was done for simple AF algebras in \cite{Th3} and \cite{ET} and it is the purpose here to do this with arbitrary simple algebras that absorb the Jiang-Su algebra tensorially, including in particular the purely infinite $C^*$-algebras that are classified by the Kirchberg-Phillips classification result. As in \cite{Th3} and \cite{ET} some of the key ingredients come from arguments developed by the second author and Matui.

Although little was known about flows that are not approximately inner on an AF-algebra, Kishimoto constructed a non approximately inner flow on a certain AF-algebra based on H. Lin's TAF classification theorem \cite{Ki3}. 
In \cite{Sa}, \cite{MS2}, and \cite{MS3}, a generalization of the classifiability permitted a construction of a non approximately inner flow on any UHF-algebra, which is known as a counter-example to the Powers-Sakai conjecture. The main result of this paper widens these previous constructions in the directions of both classifiable $C^*$-algebras and the variations of KMS spectra. As a consequence, we can construct uncountably many flows which are not approximately inner on any classifiable monotracial $C^*$-algebra up to weak cocycle conjugacy (Corollary \ref{21-10-21b}).

\section{Proper simplex bundles and flows on unital $C^*$-algebras}\label{2000}

We start with an abstract characterization of bundles of KMS state spaces introduced in \cite{ET}. Fix a second countable locally compact Hausdorff space $S$ and let $\pi$  be a continuous map from $S$ to $\mathbb R$. We shall say that the couple $(S, \pi)$ is a \emph{simplex bundle} if the inverse image $\pi^{-1}(t)$ is a compact metrizable Choquet simplex for any $t\in \mathbb R$ in the relative topology from $S$. Here we remark that $\pi$ is not necessarily surjective and the empty set is regarded as a simplex. When $(S, \pi)$ is a simplex bundle we shall denote by $\mathcal A(S, \pi)$ the set of continuous functions $f$ from $S$ to $\mathbb R$ such that 
the restriction $f|_{\pi^{-1}(t)}$ of $f$ to $\pi^{-1}(t)$ is affine for any $t\in\mathbb R$.
\begin{defn}\label{25-08-21} (See \cite{ET} and \cite{BEK2}.) A simplex bundle $(S, \pi)$ is \emph{proper}, if
\begin{itemize}
\item[(1)] $\pi$ is proper, i.e., $\pi^{-1}(K)$ is compact in $S$ for any compact subset $K$ of $\mathbb R$, 
\item[(2)] $\mathcal A(S,\pi)$ separates points of $S$, i.e., for any $x\neq y$ in $S$ there exists  $f \in\mathcal A(S,\pi)$ such that $f(x) \neq f(y)$.
\end{itemize}
\end{defn}
Note that when $S$ is compact, condition (1) is automatically satisfied and that $(S,\pi)$ is then a compact simplex bundle over $\mathbb R$ in the sense of \cite{BEK2}. Since we only consider bundles over $\mathbb R$ in this paper we shall call $(S,\pi)$ a \emph{compact simplex bundle} when $S$ is compact.  

Two proper simplex bundles $(S,\pi)$ and $(S',\pi')$ are \emph{isomorphic} when there is a homeomorphism $\phi : S \to S'$ such that $\pi' \circ \phi = \pi$ and $\phi: \pi^{-1}(\beta) \to {\pi'}^{-1}(\beta)$ is affine for all $\beta \in \mathbb R$.

The reason for introducing the concept of proper simplex bundle is that the collection of KMS state spaces for a flow on a unital separable $C^*$-algebra is a proper simplex bundle in a canonical way. To explain this we emphasize that all $C^*$-algebras in this paper are assumed to be separable and all traces and weights on a $C^*$-algebra are required to be non-zero, densely defined and lower semicontinuous. A flow $\theta = ( \theta_t)_{t \in \mathbb R}$ on a $C^*$-algebra $A$ is a continuous representation of $\mathbb R$ by automorphisms of $A$. Let $A$ be a $C^*$-algebra, $\theta$ a flow on $A$ and $\beta$ a real number. A $\beta$-KMS weight for $\theta$ is a weight $\omega$ on $A$ such that $\omega \circ \theta_t = \omega$ for all $t$, and 
\begin{equation}\label{27-10-20c}
\omega(a^*a) \ = \ \omega\left(\theta_{-\frac{i\beta}{2}}(a) \theta_{-\frac{i\beta}{2}}(a)^*\right), \ \ a \in D(\theta_{-\frac{i\beta}{2}})  .
\end{equation}
In particular, a $0$-KMS weight for $\theta$ is a $\theta$-invariant trace. 
 A bounded $\beta$-KMS weight is called a $\beta$-KMS functional and a $\beta$-KMS state when it is of norm one. For states alternative formulations of the KMS condition can be found in \cite{BR}.

 Assume that $A$ is unital. For each $\beta \in \mathbb R$ denote by $S^\theta_\beta$ the (possibly empty) set of $\beta$-KMS states for $\theta$, and by $E(A)$ the state space of $A$, a compact convex set in the weak* topology.
Set
$$
S^\theta = \left\{(\omega, \beta) \in E(A) \times \mathbb R: \ \omega \in S^\theta_\beta \right\} ,
$$
and equip $S^\theta$ with the relative topology inherited from the product topology of $E(A) \times \mathbb R$. Let $\pi^\theta : S^\theta \to \mathbb R$ denote the projection onto the second coordinate. It follows from general facts about  KMS states that $(S^\theta,\pi^\theta)$ is a proper simplex bundle, \emph{the KMS bundle of $\theta$}. See \cite{ET}.

\section{Compact simplex bundles as KMS bundles for flows on the  Jiang-Su algebra}

While we would like to construct a flow on the Jiang-Su algebra such that its bundle of KMS state spaces is an arbitrary proper simplex bundle for which the fiber $\pi^{-1}(0)$ over $0$  contains exactly one point, at present we only know how to do this when the bundle is compact. We fix therefore a compact simplex bundle $(S,\pi)$ such that $\pi^{-1}(0)$ contains exactly one point $\overline{o}$.

\subsection{Construction of a simple ordered abelian group}\label{3.1}

In what follows we denote the closed support of a real-valued function $f$ by $\supp f$. Let $\mathcal A_{0}(S,\pi)$ denote the set of elements $f \in \mathcal A(S,\pi)$ for which $ \overline{o} \notin \supp f$. Since the topology of $S$ is second countable we can choose a countable subgroup $G_{0}$ of $\mathcal A_{0}(S,\pi)$ with the following density property:

\begin{TOM}\label{07-09-21} For all $\epsilon > 0$ and all $f \in \mathcal A(S,\pi)$ such that $f(\overline{o}) = 0$, there is an element $g \in G_{0}$ such that $\sup_{x \in S} |f(x) -g(x)| < \epsilon$. 
\end{TOM}

Enlarging $G_0$, we may suppose that the functions
$$
{e^{n \pi}}{(1-e^{-\pi})^m}f, \ \ n,m \in \mathbb Z,
$$
all are in $G_0$ when $f$ is. Set
$$
G = \left( \bigoplus_{\mathbb Z} \mathbb Z\right) \oplus G_0,
$$ 
and define ${L} : G  \to \mathcal A(S,\pi)$ by
$$
{L}\left(\xi,  g\right)(x) = g(x) + \sum_{n \in \mathbb Z} z_ne^{n\pi(x)} ,
$$ 
where $\xi = (z_n)_{n \in \mathbb Z}  \in \bigoplus_\mathbb Z \mathbb Z $. 
Set
$$
G^+ = \left\{ (\xi, g) \in G : \ L(\xi ,g)(x) > 0, \ x \in S\right\} \cup \{0\} .
$$
The proof of the following lemma is straightforward. The last statement uses that $S$ is compact.
\begin{lemma}\label{09-10-21} $G = G^+ - G^+, \ G^+ \cap (-G^+) = \{0\}$, and every non-zero element of $G^+$ is an order unit for $(G,G^+)$.
\end{lemma}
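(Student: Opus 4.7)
The plan is to handle the three assertions in the order (2), (3), (1): the second is essentially immediate from linearity of $L$, the third is the content that genuinely uses compactness of $S$ (as the author flags), and the first will follow at once from (3) by exhibiting a single distinguished nonzero element of $G^+$. Two preliminary remarks will be needed. First, it is clear from the defining formula that $L \colon G \to \mathcal A(S,\pi)$ is a group homomorphism. Second, for every $v = (\xi, g) \in G$ the function $L(v)$ is continuous on $S$ (only finitely many of the $z_n$ are nonzero, so the sum defining $L(v)$ is a finite sum of continuous functions), and is therefore uniformly bounded on $S$, since $S$ is compact.

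For (2), if both $v$ and $-v$ lie in $G^+$ and $v \neq 0$, then $L(v)(x) > 0$ and $L(v)(x) = -L(-v)(x) < 0$ for every $x \in S$, which is absurd. For (3), given $u \in G^+ \setminus \{0\}$, the function $L(u)$ is continuous and strictly positive on the compact space $S$, so there is a constant $c > 0$ with $L(u) \geq c$ on $S$. For any $v \in G$, set $M := \sup_{x \in S} L(v)(x)$, which is finite by the preliminary remark. Then any integer $n > M/c$ satisfies $L(nu - v)(x) \geq nc - M > 0$ for all $x \in S$, so $nu - v \in G^+$; this is exactly the statement that $u$ is an order unit. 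For (1), let $u_0 := (e_0, 0)$ with $e_0 \in \bigoplus_{\mathbb Z} \mathbb Z$ the indicator of coordinate $0$; then $L(u_0) \equiv 1$, so $u_0 \in G^+ \setminus \{0\}$. Applying (3) to $u_0$ produces, for any $v \in G$, an integer $n$ with $nu_0 - v \in G^+$, and $nu_0 \in G^+$ since $L(nu_0) \equiv n > 0$. The decomposition $v = nu_0 - (nu_0 - v)$ then writes $v$ as a difference of elements of $G^+$.

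I do not expect a real obstacle. The only substantive input is the double use of compactness of $S$ in (3) — once to produce a uniform positive lower bound for $L(u)$, once to uniformly bound $L(v)$ from above — and this is precisely the point the author highlights. Without compactness (in the noncompact proper-bundle setting that presumably appears later in the paper), the order-unit property would generally fail: a fixed $L(u)$ would be outgrown along some unbounded direction in $\pi(S)$ by an $L(v)$ of larger exponential growth, so a strictly more delicate construction of the positive cone would be required.
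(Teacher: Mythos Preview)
Your argument is correct and is precisely the straightforward verification the paper alludes to (the paper omits the proof entirely, only noting that compactness is needed for the order-unit assertion). Your identification of the two uses of compactness in (3)---a uniform lower bound for $L(u)$ and a uniform upper bound for $L(v)$---is exactly the point.
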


In short, this lemma says that $(G,G^+)$ is a simple ordered abelian group. Note that 
$$
\mathbb Q \otimes_\mathbb Z G =  \left(\bigoplus_\mathbb Z \mathbb Q \right) \oplus \mathbb QG_0
$$
and that $L$ extends to a $\mathbb Q$-linear map $\overline{L} : \mathbb Q \otimes_\mathbb Z G \to \mathcal A(S,\pi)$. We set
$$
(\mathbb Q \otimes_\mathbb Z G)^+ = \left\{ (\xi, g) \in \mathbb Q \otimes_\mathbb Z G : \ \overline{L}(\xi ,g)(x) > 0, \  x \in S\right\} \cup \{0\} .
$$

\begin{lemma}\label{09-10-21a} $( \mathbb Q \otimes_\mathbb Z G ,(\mathbb Q \otimes_\mathbb Z G )^+)$ is a simple ordered abelian group with the strong Riesz interpolation property: If $g_i, k_j\in  \mathbb Q \otimes_\mathbb Z G$ and $g_i < k_j$ for $i,j \in \{1,2\}$, then there is an element $h \in \mathbb Q \otimes_\mathbb Z G$ such that $g_i < h < k_j$ for all $i,j \in \{1,2\}$.
\end{lemma}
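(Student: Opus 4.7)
I would verify the two assertions separately. The ordered-group properties transfer from Lemma~\ref{09-10-21} by rationalisation: the identity $\mathbb Q\otimes G = (\mathbb Q\otimes G)^+ - (\mathbb Q\otimes G)^+$ follows from the analogous fact for $(G,G^+)$; antisymmetry $(\mathbb Q\otimes G)^+ \cap (-(\mathbb Q\otimes G)^+) = \{0\}$ is immediate from the definition of the cone via strict positivity of $\overline L$; and for the order-unit property I would invoke the compactness of $S$, which forces any non-zero positive element $(\xi,g)$ to satisfy $\overline L(\xi,g) \geq \delta$ on $S$ for some $\delta > 0$. Since $\overline L(\eta)$ is bounded on compact $S$ for any $\eta \in \mathbb Q \otimes G$, a sufficiently large rational multiple of $(\xi,g)$ dominates $\pm \eta$.

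For the strong Riesz interpolation, given $g_i < k_j$ in $\mathbb Q \otimes G$ for $i,j \in \{1,2\}$ I would put $u_i = \overline L(g_i)$ and $v_j = \overline L(k_j) \in \mathcal A(S,\pi)$. By compactness of $S$ there exists $\eta > 0$ with $v_j - u_i \geq 3\eta$ uniformly on $S$ for all $i,j$. My strategy is in two steps: (a) produce $w \in \mathcal A(S,\pi)$ interpolating with margin, $u_i + \eta \leq w \leq v_j - \eta$ on $S$; and (b) approximate $w$ within $\eta$ in sup norm by $\overline L(h)$ for some $h \in \mathbb Q \otimes G$. The resulting $h$ then satisfies $\overline L(h-g_i) > 0$ and $\overline L(k_j - h) > 0$ everywhere on $S$, i.e.\ $g_i < h < k_j$.

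The hard part is step (a). Fibre-wise it is routine: on each compact metrizable Choquet simplex $\pi^{-1}(t)$ the restrictions of $u_i, v_j$ are continuous affine with a uniform strict gap, so Edwards' separation theorem (equivalently, Riesz interpolation in $\mathrm{Aff}$ of a Choquet simplex) provides a fibre interpolant. Patching these together continuously over the base $\mathbb R$ is the real work; I plan to carry this out by a continuous selection in the spirit of Michael's theorem, exploiting the strict gap to get lower semicontinuity of the convex-valued correspondence of admissible interpolants together with the compactness and metrizability of $S$. Step (b) is then routine: by Property~\ref{07-09-21} the subgroup $\mathbb Q G_0$ is sup-norm dense in $\{f \in \mathcal A(S,\pi) : f(\overline o) = 0\}$, and since $e^{0\cdot\pi} = 1$ lies in $\overline L(\mathbb Q\otimes G)$, the decomposition $f = f(\overline o) + (f - f(\overline o))$ shows that $\overline L(\mathbb Q\otimes G)$ is sup-norm dense in $\mathcal A(S,\pi)$, so an approximant of $w$ within $\eta$ exists.
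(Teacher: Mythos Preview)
Your two-step strategy---interpolate first in $\mathcal A(S,\pi)$, then approximate by an element of $\overline L(\mathbb Q\otimes_\mathbb Z G)$---is exactly the paper's. The only substantive difference is in step~(a): rather than building the continuous affine interpolant by hand via a Michael-type selection, the paper simply invokes Lemma~2.2 of \cite{BEK2}, which provides continuous affine interpolation on a compact simplex bundle and moreover allows one to prescribe the value of the interpolant at a single point. The paper exploits this last feature by fixing the value at $\overline o$ to be a rational $q$ strictly between $\overline L(g_i)(\overline o)$ and $\overline L(k_j)(\overline o)$; then $h_0 - q$ already vanishes at $\overline o$, so Property~\ref{07-09-21} applies directly and one sets $h = (q^{(0)},g)$. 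Your route through Michael selection would in the end reprove the content of that lemma, so it is correct in principle but is left only as a plan and is unnecessary given the existing reference. Your step~(b), splitting off the constant term and approximating the remainder via Property~\ref{07-09-21}, is equivalent to the paper's but slightly less streamlined than arranging a rational value at $\overline o$ from the outset.
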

\begin{proof} Only the interpolation property is not straightforward, so assume that $g_i, k_j\in  \mathbb Q \otimes_\mathbb Z G$ and $g_i < k_j$ for $i,j \in \{1,2\}$. Choose $q \in \mathbb Q$ such that $g_i(\overline{o}) < q < k_j(\overline{o})$ for all $i,j$. It follows from Lemma 2.2 of \cite{BEK2} that there is an element $h_0 \in \mathcal A(S,\pi)$ such that $h_0(\overline{o}) = q$ and $g_i(x) < h_0(x) < k_j(x)$ for all $i,j$ and all $x \in S$. Let $\delta > 0$ be smaller than $k_j(x)-h_0(x)$ and $h_0(x)-g_i(x)$ for all $i,j$ and all $x \in S$. Thanks to Property \ref{07-09-21} there is $g \in G_0$ such that $\left| h_0(x) - q - g(x)\right| < \delta$ for all $x \in S$. 
For $t \in \mathbb Q$ denote by $t^{(0)}$ the element of $\bigoplus_\mathbb Z \mathbb Z$ given by
$$
t^{(0)}_n = \begin{cases} t, & \ n = 0, \\ 0, & \ n \neq 0. \end{cases}
$$
Then $ h = (q^{(0)},g) \in G$ has the desired properties.
\end{proof}

In the notation of the last proof, set $v = (1^{(0)},0) \in G^+$. Then $v$ is an order unit in both $(G,G^+)$ and $( \mathbb Q \otimes_\mathbb Z G ,(\mathbb Q \otimes_\mathbb Z G )^+)$. Denote by $S(G)$ and $S(\mathbb Q \otimes_\mathbb Z G)$ the corresponding state spaces. The restriction map 
\begin{equation}\label{09-10-21c} 
S(\mathbb Q \otimes_\mathbb Z G) \to S(G)
\end{equation}
is clearly an affine homeomorphism and we conclude therefore from Lemma \ref{09-10-21a} that $S(G)$ is a Choquet simplex by Proposition 1.7 of \cite{EHS}.

Let $\mathcal Z$ denote the Jiang-Su algebra, \cite{JS}. In what follows a $C^*$-algebra $A$ will be called $\mathcal Z$-stable when $A \otimes \mathcal Z \simeq A$. Let $F_0,F_1$ be finite-dimensional $C^*$-algebras and $\varphi_0, \varphi_1 : F_0 \to F_1$ unital $*$-homomorphisms. The $C^*$-algebra 
$$
\left\{ (a,f) \in F_0 \oplus (C[0,1] \otimes F_1): \ \varphi_i(a) = f(i), \ i =0,1 \right\}
$$
will be called a \emph{building block}. In what follows we shall denote by $T(A)$ the tracial state space of a unital $C^*$-algebra $A$.

\begin{prop}\label{30-09-21bxx} (\cite{E3},\cite{Th1},\cite{GLN1}.) There is a simple unital $\mathcal Z$-stable $C^*$-algebra $E$ which is an inductive limit of building blocks with the following properties. 
\begin{itemize}
\item[(1)] $K_1(E) = 0$.
\item[(2)] The canonical map $\tau : T(E) \to S(K_0(E),[1])$ is bijective.
\item[(3)] There is an isomorphism $\phi : K_0(E) \to G$ of ordered abelian groups such that $\phi([1]) = v$.
\end{itemize}
\end{prop}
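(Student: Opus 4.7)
The plan is to realize $(G, G^+, v)$, together with $K_1 = 0$ and trace simplex $S(G)$, as the Elliott invariant of a simple unital inductive limit of building blocks, and then to invoke the existing range-of-invariant theorems of \cite{E3} and \cite{Th1} and the classification theorem \cite{GLN1}.

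I would first verify that the prescribed data $(G, G^+, v, 0, S(G), \mathrm{ev})$ is abstractly admissible as an Elliott invariant. Lemma~\ref{09-10-21} already gives that $(G, G^+)$ is a countable simple ordered abelian group with order unit $v$. Lemma~\ref{09-10-21a}, together with the restriction isomorphism (\ref{09-10-21c}) and Proposition~1.7 of \cite{EHS}, shows that $S(G)$ is a metrizable Choquet simplex. The pairing $G \times S(G) \to \mathbb{R}$ given by evaluation is the candidate for the canonical trace map $\tau$; the bijectivity asserted in condition~(2) means that the trace simplex of $E$ carries no information beyond the states on $K_0(E)$, which is a feature of inductive limits of building blocks whose inductive system has been chosen so that all $2$-quasitraces come from $K_0$-states.

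Next, I would apply the range-of-invariants theorem for simple unital inductive limits of building blocks due to Elliott \cite{E3} and extended in \cite{Th1}: for any countable simple ordered abelian group with distinguished order unit whose rationalization satisfies Riesz interpolation, together with the associated state space as prescribed trace simplex, there exists a simple unital inductive limit $E = \varinjlim E_n$ of building blocks realizing these data, with $K_1(E) = 0$. The construction is an Elliott intertwining argument: one selects an increasing sequence of finitely generated subgroups exhausting $G$, encodes the discrete summand $\bigoplus_{\mathbb{Z}} \mathbb{Z}$ of $G$ via the spectra of the finite-dimensional pieces $F_0, F_1$ of each building block, and exploits the strong Riesz interpolation of $\mathbb{Q} \otimes_{\mathbb{Z}} G$ from Lemma~\ref{09-10-21a} to produce strictly positive connecting maps that approximate the inclusion of each finitely generated subgroup into $G$ to prescribed tolerances. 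Since $E$ is then a simple unital $C^*$-algebra that is an inductive limit of one-dimensional NCCW complexes satisfying the UCT, the Gong-Lin-Niu classification \cite{GLN1} applies and in particular $E$ is $\mathcal{Z}$-stable, giving all three assertions.

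The main obstacle is not the abstract existence—which is handled by the cited references—but verifying that the particular group $G$ constructed in Section~\ref{3.1}, with its mixed structure of a free summand $\bigoplus_{\mathbb{Z}} \mathbb{Z}$ indexing the exponentials $e^{n\pi(\cdot)}$ together with a countable subgroup $G_0 \subset \mathcal{A}_0(S, \pi)$, actually satisfies the algebraic hypotheses of the range theorem and produces the \emph{bijectivity} of $\tau$ in condition~(2). This is where Property~\ref{07-09-21} and the closure of $G_0$ under multiplication by the functions $e^{n\pi}(1-e^{-\pi})^m$ are essential: together with the $\mathbb{Q}$-rational Riesz interpolation of Lemma~\ref{09-10-21a}, they make $L(G)$ sufficiently dense in $\mathcal{A}(S, \pi)$ that every state on $G$ arises from evaluation at a unique point of $S$, forcing the canonical map $\tau$ to be a bijection and not merely an affine continuous surjection onto $S(K_0(E),[1])$.
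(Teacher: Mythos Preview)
Your outline is essentially the paper's approach: verify that $((G,G^+,v),0,S(G),\id)$ is an admissible Elliott invariant and then invoke a range-of-invariant theorem. The paper does this in one line by citing Corollary~13.51 of \cite{GLN1}, which already packages everything you sketch (existence of a simple unital $\mathcal Z$-stable inductive limit of building blocks with the prescribed invariant); there is no need to pass separately through \cite{E3} and \cite{Th1} and then argue $\mathcal Z$-stability afterwards.

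Your final paragraph, however, reflects a genuine misunderstanding of condition~(2). The bijectivity of $\tau: T(E) \to S(K_0(E),[1])$ has nothing to do with the relationship between $S(G)$ and the original bundle $S$, nor with density of $L(G)$ in $\mathcal A(S,\pi)$. In the Elliott-invariant formalism one \emph{prescribes} a Choquet simplex $T$ together with a pairing $r: T \to S(K_0)$, and the canonical trace map of the resulting algebra is exactly this $r$. The paper takes $T = S(G)$ and $r = \id$, so $\tau$ is bijective by fiat once the range theorem is invoked. No density or state-identification argument is needed here; the facts you cite (Property~\ref{07-09-21}, density of $L(G)$, identification of states with points of $S$) enter only later, in Lemma~\ref{27-08-21xxx}, to analyze the KMS structure. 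Your assertion that ``every state on $G$ arises from evaluation at a unique point of $S$'' is in any case not what is proved or needed: only states satisfying the scaling relation $\phi\circ\alpha = s\phi$ are shown in Lemma~\ref{27-08-21xxx} to come from points of $S$.
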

\begin{proof} This follows from Corollary 13.51 of \cite{GLN1} since the quadruple
$$
((G,G^+,v),0, S(G), \id)$$ 
is an Elliott invariant with the properties required in that statement.
\end{proof}

In the sequel we fix $E$ as in Proposition \ref{30-09-21bxx} and just write
$$
(K_0(E),K_0(E)^+,[1],T(E)) = (G,G^+,v,S(G)).
$$
We shall denote by $\mathbb K$ the $C^*$-algebra of compact operators on an infinite-dimensional separable Hilbert space and fix a non-zero minimal projection $e_{11} \in \mathbb K$. Denote by $\Hom_+(G,\mathbb R)$ the cone of non-zero positive homomorphisms from $G$ to $\mathbb R$ and by $\mathcal T(E \otimes \mathbb K)$ the cone of densely defined lower semicontinuous traces on $E\otimes \mathbb K$. Recall that, as $E$ is unital and simple, the restriction map $\tau \mapsto \tau|_{E \otimes e_{11}}$ is bijective.

Define the automorphism $\alpha$ of $(G,G^+)$ by
$$
\alpha (\xi,f) = ( \sigma(\xi), e^{-\pi}f) , 
$$
where $\sigma \in \Aut \left(\bigoplus_\mathbb Z \mathbb Z\right)$ is the shift;
$$
\sigma(\xi)_n = \xi_{n+1}.
$$

\begin{lemma}\label{14-10-21} There is an automorphism $\gamma \in \Aut E \otimes \mathbb K$ such that $\gamma_* = \alpha$.
\end{lemma}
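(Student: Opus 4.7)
The plan is to deduce the lemma from the Elliott classification theorem, in the form that automorphisms of the Elliott invariant of a simple, separable, nuclear, $\mathcal{Z}$-stable $C^*$-algebra in the UCT class lift to automorphisms of the algebra. The algebra $E$ from Proposition~\ref{30-09-21bxx} is an inductive limit of building blocks, hence nuclear and in the UCT class; these properties together with simplicity, separability, and $\mathcal{Z}$-stability pass to the stabilization $E \otimes \mathbb{K}$.

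The first task is to verify that the given formula defines an order-automorphism of $(G, G^+)$. The shift identity $\sum_n (\sigma\xi)_n e^{n\pi(x)} = e^{-\pi(x)} \sum_n z_n e^{n\pi(x)}$ yields the key computation
\[
L\bigl(\alpha(\xi, g)\bigr)(x) \ = \ e^{-\pi(x)}\, L(\xi, g)(x),
\]
so $\alpha(G^+) \subseteq G^+$. The inverse is given by $\alpha^{-1}(\xi, g) = (\sigma^{-1}\xi, e^{\pi}g)$, which lands in $G$ thanks to the closure assumption that $G_0$ contains $e^{n\pi}(1 - e^{-\pi})^m f$ whenever it contains $f$; a symmetric calculation shows $\alpha^{-1}(G^+) \subseteq G^+$.

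The second task is to assemble the Elliott invariant of $E \otimes \mathbb{K}$ and promote $\alpha$ to an automorphism of it. We have $K_0(E \otimes \mathbb{K}) \cong G$ with positive cone $G^+$ and $K_1(E \otimes \mathbb{K}) = 0$. The restriction map $\tau \mapsto \tau|_{E \otimes e_{11}}$, combined with the bijection $T(E) \cong S(G)$ from Proposition~\ref{30-09-21bxx}(2) and the fact that $E$ is unital, identifies the trace cone $\mathcal{T}(E \otimes \mathbb{K})$ with $\Hom_+(G, \mathbb{R})$. Letting $\alpha$ act on $K_0$, the identity act on $K_1$, and $\phi \mapsto \phi \circ \alpha^{-1}$ act on the trace cone gives an automorphism of the invariant, the compatibility with the natural pairing being automatic.

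The final step is to invoke the classification theorem (Elliott--Gong--Lin--Niu in the stably finite setting, together with standard passage to the stabilization) to lift this invariant automorphism to the desired $\gamma \in \Aut(E \otimes \mathbb{K})$. The main obstacle I expect is technical rather than conceptual: one must cite the classification in a version that applies to the non-unital stable algebra $E \otimes \mathbb{K}$ and that actually produces a lift of a prescribed invariant automorphism, rather than only an isomorphism of algebras with isomorphic invariants. Passing to the stable algebra is genuinely necessary here, since $\alpha(v) = (\sigma(1^{(0)}), 0) \neq v$, so $\alpha$ does not preserve the order unit and cannot be realized by an automorphism of the unital algebra $E$ itself.
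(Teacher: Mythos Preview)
Your approach is correct in outline and you have correctly identified the essential difficulty: $\alpha$ does not fix the order unit $v$, so one cannot realize it by an automorphism of the unital algebra $E$, and one is forced to work stably. You then propose to invoke classification directly for the stable (non-unital) algebra $E\otimes\mathbb K$, while acknowledging that citing a non-unital version of the Elliott--Gong--Lin--Niu theorem is the technical crux.

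The paper takes a different, more explicit route that avoids any appeal to non-unital classification. Instead of applying classification to $E\otimes\mathbb K$ itself, it chooses projections $p_1 = 1\otimes e_{11}$ and $p_2$ with $[p_2]=\alpha([p_1])$, and uses Brown's theorem together with explicit stabilization isometries $V_i$ to transport $\alpha$ to an isomorphism $\alpha'$ of ordered groups with order unit between $K_0(p_1(E\otimes\mathbb K)p_1)$ and $K_0(p_2(E\otimes\mathbb K)p_2)$. These corners are \emph{unital}, simple, $\mathcal Z$-stable inductive limits of building blocks satisfying the UCT, so the unital classification theorem of \cite{EGLN} applies directly to produce a $*$-isomorphism $\rho:p_1(E\otimes\mathbb K)p_1\to p_2(E\otimes\mathbb K)p_2$. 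The desired $\gamma\in\Aut(E\otimes\mathbb K)$ is then assembled by hand as $\mu^{-1}\circ\Ad V_2^*\circ(\rho\otimes\id_{\mathbb K})\circ\Ad V_1\circ\mu$.

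In effect, the paper \emph{proves} the instance of non-unital classification you want to cite, by the standard reduction to corners. Your ``standard passage to the stabilization'' is exactly this argument; what you gain in brevity you lose in self-containedness, and depending on the referee you may be asked to supply precisely the details the paper writes out.
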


\begin{proof} Set $p_1 = 1 \otimes e_{11}$. There is a projection $p_2 \in E\otimes \mathbb K$ such that $\alpha([p_1]) = [p_2]$ in $K_0(E)$. Consider the homomorphism $s : E \otimes \mathbb K \to E\otimes \mathbb K \otimes \mathbb K$, $a \mapsto  a\otimes e_{11}$. It is well known that $s$ is homotopic in $\Hom( E \otimes \mathbb K, E \otimes \mathbb K \otimes \mathbb K)$ to a $*$-isomorphism $\mu :E \otimes \mathbb K \to E \otimes \mathbb K \otimes \mathbb K$. In particular, ${s}_* : K_0(E) \to K_0(E \otimes \mathbb K)$ is an isomorphism. It follows from \cite{B} that there are isometries $V_i \in M(E \otimes \mathbb K \otimes \mathbb K)$ such that $V_iV_i^* = p_i \otimes 1_{\mathbb K}$. Then $\Ad V_i \circ s : E \otimes \mathbb K \to p_i(E\otimes \mathbb K)p_i \otimes \mathbb K$ is $*$-homomorphism homotopic to a $*$-isomorphism and we get therefore an isomorphism 
$$
 \alpha'  : K_0(p_1(E \otimes \mathbb K)p_1) \to K_0(p_2(E\otimes \mathbb K)p_2)
$$
of ordered groups when we set 
$$
\alpha' = (\Ad V_2 \circ s)_*\circ \alpha \circ (\Ad V_1 \circ s)_*^{-1}.
$$
Note that $V_i(p_i \otimes e_{11})$ is a partial isometry in $p_i(E\otimes \mathbb K)p_i \otimes \mathbb K$ giving a Murray-von Neumann equivalence $V_i(p_i \otimes e_{11})V_i^* \sim p_i \otimes e_{11}$. Thus,
$$
(\Ad V_i \circ s)_*^{-1}[p_i \otimes e_{11}] = [p_i]
$$
and hence, on setting $i =1$, $\alpha'([p_1 \otimes e_{11}]) = [p_2 \otimes e_{11}]$. Thus $\alpha'$ is an isomorphism 
\begin{align*}
&(K_0(p_1(E \otimes \mathbb K)p_1), K_0(p_1(E\otimes \mathbb K)p_1)^+,[1_{p_1(E \otimes \mathbb K)p_1}]) \\
& \ \ \ \ \ \ \ \ \ \ \ \ \ \ \ \to  (K_0(p_2(E \otimes \mathbb K)p_2), K_0(p_2(E\otimes \mathbb K)p_2)^+,[1_{p_2(E \otimes \mathbb K)p_2}])
\end{align*}
of ordered groups with order unit. 

Note that the condition (2) of Proposition \ref{30-09-21bxx} is equivalent to the statement that the canonical map $\mathcal T(E \otimes \mathbb K)$ to positive functionals on $K_0(E)=K_0(E \otimes \mathbb K)$ is bijective. In other words, traces on $E \otimes \mathbb K$ are determined by their values on projections. Since $p_i(E \otimes \mathbb K)p_i \otimes \mathbb K \simeq E \otimes \mathbb K$, the same is true with $p_i(E \otimes \mathbb K)p_i$ in place of $E, \ i = 1,2$. It follows immediately that there is a unique affine homeomorphism $\chi : T(p_1(E \otimes \mathbb K)p_1) \to  T(p_2(E \otimes \mathbb K)p_2)$ compatible with $\alpha'$ in the natural sense, i.e.,
$$
\chi(\omega)_*(\alpha'(x)) = \omega_*(x)
$$
for all $\omega \in T(p_1(E \otimes \mathbb K)p_1)$ and all $x \in K_0(p_1(E\otimes \mathbb K)p_1)$.

 It follows from \cite{B} that $p_i(E\otimes \mathbb K)p_i$ is stably isomorphic to $E$ and hence from (1) in Proposition \ref{30-09-21bxx} that $K_1(p_i(E\otimes \mathbb K)p_i) = 0$. So we see that the pair $(\alpha',\chi)$ is an isomorphism of the Elliott invariant of $p_1(E \otimes \mathbb K)p_1$ onto that of $p_2(E \otimes \mathbb K)p_2$.
 
  We wish to apply Theorem 2.7 of \cite{EGLN} to conclude that the isomorphism $(\alpha',\chi)$ arises from a $*$-isomorphism between the algebras. This means showing that  $p_1(E \otimes \mathbb K)p_1$ and $p_2(E \otimes \mathbb K)p_2$ belong to the class $\mathcal N_1$ of \cite{EGLN}, i.e., have rational generalized tracial rank at most one, and in addition are amenable, satisfy the UCT, and are $\mathcal Z$-stable.
  
  For convenience, set $p_i(E \otimes \mathbb K)p_i =E_i, \ i =1,2$. Note that, by Lemma 3.19 of \cite{GLN1}, both $E_1$ and $E_2$ are simple inductive limits of building blocks since $E$ is. Both $E_1$ and $E_2$ are hereditary sub-$C^*$-algebras of $E\otimes \mathbb K$ and hence $\mathcal Z$-stable by Corollary 3.1 of \cite{TW}, as $E$ is $\mathcal Z$-stable. Hence by Theorem A of \cite{ENST}, $E_1$ and $E_2$ have finite decomposition rank (in fact, $dr(E_i) \leq 2, \ i = 1,2$). By 22.3.5 (e) and 23.1.1 of \cite{Bl}, the Universal Coefficient Theorem (UCT) holds for inductive limits of type I algebras and so $E_1$ and $E_2$ satisfy the UCT. Hence by Theorem 1.1 of \cite{EGLN}, $E_1$ and $E_2$ belong to $\mathcal N_1$ -- and so the second part of Theorem 2.7 of \cite{EGLN} applies, and the isomorphism between the Elliott invariants is determined by a $*$-isomorphism $\rho : E_1 \to E_2$.

Having $\rho$, we set $
\gamma = \mu^{-1} \circ \Ad V_2^* \circ (\rho \otimes \id_{\mathbb K}) \circ \Ad V_1 \circ \mu \in \Aut E \otimes \mathbb K$.
Then 
\begin{align*}
&\gamma_* = (\mu^{-1})_* \circ (\Ad V_2^*)_* \circ \rho_* \circ  (\Ad V_1)_* \circ \mu_*\\
& =(s_*)^{-1} \circ (\Ad V_2^*)_* \circ \alpha' \circ (\Ad V_1 \circ s)_*\\
& = (\Ad V_2 \circ s)_*^{-1} \circ \alpha' \circ (\Ad V_1 \circ s)_* = \alpha .\\
\end{align*}
\end{proof}

The following lemma follows from \cite{Sa}, \cite{MS2} and \cite{MS3}; see also the second step in the proof of Lemma 3.4 of \cite{Th2}.

\begin{lemma}\label{30-09-21dx} The automorphism $\gamma$ can be chosen to have the following additional properties.
\begin{itemize}
\item[(A)] The restriction map 
$\mu \ \mapsto \ \mu|_{E\otimes \mathbb K}$
is a bijection from traces $\mu $ in $\mathcal T(E \otimes \mathbb K) \rtimes_{\gamma} \mathbb Z)$ onto the $\gamma$-invariant traces in $\mathcal T(E\otimes \mathbb K)$, and
\item[(B)] $(E\otimes \mathbb K) \rtimes_{\gamma} \mathbb Z$ is $\mathcal Z$-stable; that is $((E \otimes \mathbb K) \rtimes_{\gamma} \mathbb Z)\otimes \mathcal Z \simeq (E\otimes \mathbb K) \rtimes_{\gamma} \mathbb Z$ where $\mathcal Z$ denotes the Jiang-Su algebra, \cite{JS}.
\end{itemize} 
\end{lemma}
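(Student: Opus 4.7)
The strategy is to exploit the freedom in the choice of $\gamma$ from Lemma \ref{14-10-21} — the $*$-isomorphism $\rho$ produced there is only determined up to asymptotic unitary equivalence — to arrange that $\gamma$ be \emph{strongly outer}, meaning that for every $\gamma$-invariant trace $\tau \in \mathcal T(E \otimes \mathbb K)$ the induced automorphism of the tracial bicommutant $\pi_\tau(E \otimes \mathbb K)''$ is properly outer (equivalently, aperiodic since $\mathbb Z$ acts). Once this is in place, property (A) follows because proper outerness forces every $\gamma$-invariant trace on $E \otimes \mathbb K$ to have a unique extension to the crossed product, and property (B) becomes a direct application of the Matui--Sato theorem giving $\mathcal Z$-stability of crossed products by strongly outer $\mathbb Z$-actions on simple separable nuclear $\mathcal Z$-stable $C^*$-algebras (\cite{MS2},\cite{MS3}). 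So the real work lies in constructing a strongly outer representative within the asymptotic unitary equivalence class of the $\gamma$ provided by Lemma \ref{14-10-21}.

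To produce such a representative I would follow the template of the second step of the proof of Lemma 3.4 of \cite{Th2}. Because $E \otimes \mathbb K$ is $\mathcal Z$-stable (as $E$ is), one has access to the central-sequence technology of \cite{Sa}: inside the central sequence algebra $(E \otimes \mathbb K)_\omega \cap (E \otimes \mathbb K)'$ one finds a unitary $u$ so that $\Ad u \circ \gamma$ induces an aperiodic automorphism on the tracial ultrapower $\pi_\tau(E \otimes \mathbb K)^\omega$ uniformly over the (compact) set of $\gamma$-invariant traces. A standard reindexing/diagonal argument then produces a genuine unitary $v \in M(E \otimes \mathbb K)$ such that $\Ad v \circ \gamma$ is strongly outer while still satisfying $(\Ad v \circ \gamma)_* = \alpha$. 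Replacing $\gamma$ by $\Ad v \circ \gamma$ completes the construction; property (B) is then read off the Matui--Sato theorem, and property (A) from the standard fact that the conditional expectation $E: (E \otimes \mathbb K) \rtimes_\gamma \mathbb Z \to E \otimes \mathbb K$ is faithful on traces precisely when the dual $\mathbb T$-action has no non-trivial invariant trace on the fixed-point algebra, which is exactly what strong outerness provides.

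The main obstacle is the \emph{uniform} control of outerness across the entire simplex of $\gamma$-invariant traces of $E \otimes \mathbb K$, which in the present setting is substantially larger than a single point — it corresponds to the $\alpha$-fixed states on the ordered group $(G,G^+,v)$, which for a non-trivial compact simplex bundle $(S,\pi)$ is itself a non-trivial Choquet simplex. Pointwise outerness arguments in the spirit of Kishimoto are not strong enough here, and it is precisely the combination of $\mathcal Z$-absorption of $E\otimes \mathbb K$ with the Sato--Matui ultrapower reformulation that enables a single central-sequence perturbation to handle the whole invariant-trace simplex at once. The remaining verifications — that the perturbed $\gamma$ still represents $\alpha$ in $K_0$, and that the resulting crossed product inherits $\mathcal Z$-stability — are by now routine consequences of the cited results.
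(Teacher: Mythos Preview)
Your approach is exactly what the paper intends: the paper's own ``proof'' consists solely of the citations to \cite{Sa}, \cite{MS2}, \cite{MS3} and the second step of Lemma 3.4 of \cite{Th2}, and your outline (perturb $\gamma$ within its $K$-theory class to make it strongly outer, then invoke Matui--Sato for (B) and the standard unique-extension-of-traces argument for (A)) is precisely the content behind those citations.

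There is, however, a factual slip in your discussion of the ``main obstacle.'' You assert that the simplex of $\gamma$-invariant traces on $E\otimes\mathbb K$ is a non-trivial Choquet simplex corresponding to the $\alpha$-fixed states on $(G,G^+,v)$. In fact, by Lemma~\ref{27-08-21xxx} applied with $s=1$ (so $\beta=0$), such states are in bijection with $\pi^{-1}(0)$, which by the standing hypothesis of the section consists of the single point $\overline{o}$; this is recorded explicitly as Lemma~\ref{09-10-21i}. So the invariant-trace simplex is a single ray, and the ``uniform control over a non-trivial simplex'' difficulty you highlight does not arise here. This does not invalidate your argument---it only means the situation is easier than you suggest, and the Sato/Matui--Sato machinery applies in its simplest (monotracial) form. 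Your closing sentence about (A) is also slightly garbled: the clean statement is that strong outerness of $\gamma$ in each invariant GNS representation forces $\tau\circ P$ to be the unique tracial extension of any $\gamma$-invariant $\tau$, where $P$ is the canonical conditional expectation.
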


We assume in what follows that $\gamma \in \Aut E \otimes \mathbb K$ is chosen such that $\gamma_* = \alpha$ and such that (A) and (B) of Lemma \ref{30-09-21dx} hold. Set
$$
C = (E \otimes \mathbb K)\rtimes_\gamma \mathbb Z  \ .
$$
Since $\gamma_*^k = \alpha^k \neq \id$ when $k \neq 0$, no non-trivial power of $\gamma$ is inner. Since $E \otimes \mathbb K$ is simple it follows from Theorem 3.1 of \cite{Ki1} that $C$ is simple. 
It follows from the Pimsner-Voiculescu exact sequence, \cite{PV}, that we can identify $K_0(C)$, as a group, with the quotient
$$
G/(\id - \alpha)(G)  ,
$$
in such a way that the map $\iota_* : K_0(E) \to K_0(C)$ induced by the inclusion $\iota : E \otimes \mathbb K \to C$ becomes the quotient map
$$
q : G \to  G/(\id - \alpha)(G) .
$$
Define $\Sigma_0 : G \to \mathbb Z$ by
$$
\Sigma_0((z_n)_{n \in \mathbb Z}, f) = \sum_{n \in \mathbb Z} z_n. 
$$

\begin{lemma}\label{06-10-21c} $\ker \Sigma_0 = (\id - \alpha)(G)$.
\end{lemma}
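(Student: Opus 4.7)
The plan is to prove the two inclusions separately; the inclusion $(\id - \alpha)(G) \subseteq \ker \Sigma_0$ is immediate, while the reverse inclusion requires exploiting both the telescoping identity on $\bigoplus_{\mathbb Z} \mathbb Z$ and the closure of $G_0$ under division by $1-e^{-\pi}$ that was arranged during the construction.

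For the easy direction, I would take $(\eta, g) \in G$ with $\eta = (y_n) \in \bigoplus_{\mathbb Z} \mathbb Z$ and observe that the $\bigoplus_{\mathbb Z} \mathbb Z$-coordinate of $(\id - \alpha)(\eta,g)$ is $\eta - \sigma(\eta)$, whose $n$-th entry is $y_n - y_{n+1}$. Since $\eta$ has finite support, $\Sigma_0$ of this element telescopes to $0$. For the reverse inclusion, given $((z_n), f) \in \ker \Sigma_0$, I would split it as $((z_n), 0) + (0, f)$ and treat the two summands separately. For the first, the hypothesis $\sum_n z_n = 0$ together with the finite support of $(z_n)$ makes
$$
\eta_n \;:=\; \sum_{k \geq n} z_k
$$
an element of $\bigoplus_{\mathbb Z} \mathbb Z$, since $\eta_n = 0$ for $n$ sufficiently large (empty sum) and also for $n$ sufficiently negative (by the vanishing of the total sum). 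Then $\eta_n - \eta_{n+1} = z_n$, so $((z_n), 0) = (\id - \alpha)(\eta, 0)$.

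For the second summand, the key observation is that $\pi^{-1}(0) = \{\overline{o}\}$ and $\overline{o} \notin \supp f$, so $1 - e^{-\pi(x)}$ vanishes only at $\overline{o}$, while $f$ vanishes in an open neighborhood of $\overline{o}$. Consequently the function $g(x) := (1-e^{-\pi(x)})^{-1} f(x)$, defined to be $0$ on that neighborhood, extends continuously to all of $S$, and since $1 - e^{-\pi(x)}$ is constant on each fiber of $\pi$, $g$ is affine on fibers, i.e., $g \in \mathcal A_0(S,\pi)$. The enlargement of $G_0$ to be closed under multiplication by $e^{n\pi}(1-e^{-\pi})^m$ for all $n,m \in \mathbb Z$ (in particular $m = -1$) is exactly what guarantees $g \in G_0$. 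Then $g - e^{-\pi} g = (1-e^{-\pi})g = f$, so $(0, f) = (\id - \alpha)(0, g)$, completing the argument.

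The only point that I expect to require a moment's care is the justification that $(1-e^{-\pi})^{-1} f$ lives in $G_0$: this depends on the previously arranged closure property of $G_0$ and on the precise role of the assumption $\overline{o} \notin \supp f$, without which the quotient would not even be continuous on $S$. Everything else is formal manipulation.
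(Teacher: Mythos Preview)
Your argument is correct and follows essentially the same route as the paper's proof: the paper disposes of the easy inclusion as ``trivial'', handles the $\bigoplus_{\mathbb Z}\mathbb Z$-component by citing the telescoping construction from Lemma~4.6 of \cite{Th3} (which is exactly your definition of $\eta_n$), and then invokes the closure of $G_0$ under multiplication by $(1-e^{-\pi})^{-1}$ to write $f = (1-e^{-\pi})g$ with $g \in G_0$. Your version simply makes all of these steps explicit, including the verification that $(1-e^{-\pi})^{-1}f$ is well-defined and lies in $\mathcal A_0(S,\pi)$, which the paper leaves implicit in the phrase ``it follows immediately from the conditions imposed on $G_0$''.
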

\begin{proof}  If $((z_n)_{n \in \mathbb Z}, f) \in \ker \Sigma_0$ it follows as in the proof of Lemma 4.6 of \cite{Th3} that there is an element $\xi \in \bigoplus_\mathbb Z \mathbb Z$ such that $(z_n)_{n \in \mathbb Z} = (\id - \sigma)(\xi)$. It follows immediately from the conditions imposed on $G_0$ that $f = (1 -e^{-\pi})g$ for some $g \in G_0$. Then $(\id -\alpha)(\xi,g) = ((z_n)_{n \in \mathbb Z}, f)$ showing that $\ker \Sigma_0 \subseteq (\id - \alpha)(G)$. The reverse inclusion is trivial.
\end{proof}

It follows from Lemma \ref{06-10-21c} that $\Sigma_0$ induces an isomorphism
$$
\Sigma : K_0(C) = G/(\id - \alpha)(G)  \to \mathbb Z \ 
$$
such that $\Sigma \circ q = \Sigma_0$. Every point $s \in S$ defines by evaluation a state $\ev_s$ on $\mathcal A(S,\pi)$. Note that
$$
\ev_s \circ L\circ \alpha = e^{- \beta} \ev_s \circ L
$$
where $\beta = \pi(s)$, and $L$ is as defined in Section \ref{3.1}.

\begin{lemma}\label{27-08-21xxx}
Let $\phi \in S(G)$ be a state of $(G,G^+,v)$ with the property that $\phi \circ \alpha = s \phi$ for some $s > 0$. Set $\beta = -\log s$. There is a unique point $s \in \pi^{-1}(\beta)$ such that $\phi = \ev_s\circ L$.
\end{lemma}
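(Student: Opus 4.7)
The plan is to transport $\phi$ from the abstract ordered group to a state on the function space $\mathcal{A}(S,\pi)$, represent it by a probability measure on $S$, and then exploit the eigenequation to force that measure onto the fiber $\pi^{-1}(\beta)$, where it must collapse to a single point by the Choquet simplex structure of the fiber.

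First I would extend $\phi$ to a state $\tilde\phi$ on $\mathbb{Q}\otimes_{\mathbb{Z}} G$ via the affine homeomorphism \eqref{09-10-21c}, and show that $\tilde\phi$ factors through $\overline{L}$: if $\overline{L}(\xi,g)=0$, then $\varepsilon v \pm (\xi,g)\in(\mathbb{Q}\otimes_{\mathbb{Z}} G)^+$ for every rational $\varepsilon>0$, so $|\tilde\phi(\xi,g)|\le\varepsilon$ and hence $\tilde\phi(\xi,g)=0$. The resulting functional $\Phi$ on $\overline{L}(\mathbb{Q}\otimes_{\mathbb{Z}} G)\subseteq\mathcal{A}(S,\pi)$ is positive and sup-norm contractive, and by Property \ref{07-09-21} its domain is uniformly dense in $\mathcal{A}(S,\pi)$, so $\Phi$ extends by continuity to a state on $\mathcal{A}(S,\pi)$. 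A direct calculation from the definitions yields $\overline{L}\circ\alpha=e^{-\pi}\cdot\overline{L}$, so the hypothesis $\phi\circ\alpha=s\phi$ transfers to $\Phi(e^{-\pi}h)=e^{-\beta}\Phi(h)$ for every $h\in\mathcal{A}(S,\pi)$; iterating gives $\Phi(e^{-n\pi})=e^{-n\beta}$ for every $n\in\mathbb{Z}$, noting that $e^{-n\pi}\in\mathcal{A}(S,\pi)$ since it is constant (hence affine) on each fiber.

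The step I expect to be the main obstacle is concentrating $\Phi$ on the single fiber $\pi^{-1}(\beta)$. To handle this I would first apply Hahn--Banach/Riesz, using $\Phi(1)=1=\|\Phi\|$, to extend $\Phi$ to a Borel probability measure $\mu$ on the compact space $S$. The key computation is then
\[
\int_S(e^{-\pi}-e^{-\beta})^2\,d\mu \;=\; \Phi(e^{-2\pi})-2e^{-\beta}\Phi(e^{-\pi})+e^{-2\beta}\;=\;0,
\]
which makes sense because $(e^{-\pi}-e^{-\beta})^2\in\mathcal{A}(S,\pi)$. Non-negativity of the integrand forces $\mu$ to be concentrated on $\pi^{-1}(\beta)$; in particular, $\pi^{-1}(\beta)\neq\emptyset$.

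To conclude, for any $h\in\mathcal{A}(S,\pi)$ the restriction $h|_{\pi^{-1}(\beta)}$ is a continuous affine function on the compact Choquet simplex $\pi^{-1}(\beta)$, so $\Phi(h)=\int h\,d\mu=h(x_0)$ where $x_0\in\pi^{-1}(\beta)$ is the barycenter of $\mu$. Specializing to $h=L(\xi,g)$ gives $\phi=\ev_{x_0}\circ L$. Uniqueness of $x_0$ is then immediate from condition (2) of Definition \ref{25-08-21}, since $\mathcal{A}(S,\pi)$ separates points of $S$.
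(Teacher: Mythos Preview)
Your proof is correct and follows the same overall architecture as the paper's: extend $\phi$ to $\mathbb{Q}\otimes_{\mathbb{Z}} G$, factor it through $\overline{L}$ using the order structure, pass to a state on $\mathcal{A}(S,\pi)$ by density, represent it by a Borel probability measure on $S$, concentrate the measure on the fiber $\pi^{-1}(\beta)$, and then extract a point. Where you differ is in the execution of the last two steps. For the concentration, the paper tests against all functions of the form $g\circ\pi$ with $g\in C_{\mathbb{R}}(\pi(S))$ to show that the pushforward $m\circ\pi^{-1}$ is the Dirac mass at $\beta$; your variance computation $\int(e^{-\pi}-e^{-\beta})^2\,d\mu=0$ reaches the same conclusion in one line, legitimately, since $(e^{-\pi}-e^{-\beta})^2$ is constant on fibers and hence lies in $\mathcal{A}(S,\pi)$. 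For identifying the point, the paper factors $\psi$ through the restriction map $\mathcal{A}(S,\pi)\to\operatorname{Aff}\pi^{-1}(\beta)$, invokes Lemmas~2.2 and~2.3 of \cite{BEK2} to verify that the induced functional is a state on $\operatorname{Aff}\pi^{-1}(\beta)$, and then uses that such states are point evaluations; your barycenter argument sidesteps those external lemmas entirely, and the point is well defined because $\mathcal{A}(S,\pi)$ separates points of $\pi^{-1}(\beta)$. Both shortcuts are sound, and your route is a little more self-contained.
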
 
\begin{proof} Extend $\phi$ to a $\mathbb Q$-linear state $\overline{\phi}$ of $(\mathbb Q \otimes_\mathbb Z G, (\mathbb Q \otimes_\mathbb Z G)^+,v)$. If $(\xi,g) \in \mathbb Q \otimes_\mathbb Z G$ and $\overline{L}(\xi,g) = 0$ we have that $\frac{1}{n}v \pm (\xi,g) \in (\mathbb Q \otimes_\mathbb Z G)^+$ and hence 
$$
- \frac{1}{n} \leq \overline{\phi}(\xi,g) \leq \frac{1}{n}
$$
for all $n \in \mathbb N$, i.e. $\overline{\phi}(\xi,g) =0$. It follows that there is a $\mathbb Q$-linear map $\psi : \overline{L}(\mathbb Q \otimes_\mathbb Z G) \to \mathbb R$ such that $\psi \circ \overline{L} = \overline{\phi}$. If $f \in \overline{L}(\mathbb Q \otimes_\mathbb Z G)$, $n,m \in \mathbb N$ and $\left|f(x)\right| < \frac{n}{m}$ for all $x \in S$, there is an element $w \in \mathbb Q \otimes_\mathbb Z G$ such that $\overline{L}(w) = f$ and $-n v < m w < nv$ in $\mathbb Q \otimes_\mathbb Z G$, implying that 
$$
\left|\psi(f)\right| \leq \frac{n}{m}.
$$
It follows that $\left|\psi(f)\right| \leq \left\|f\right\|$. It is an easy consequence of Property \ref{07-09-21} that $\overline{L}(\mathbb Q \otimes_\mathbb Z G)$ is dense in $\mathcal A(S,\pi)$ and it follows therefore that $\psi$ extends to a linear norm-contractive map $\psi : \mathcal A(S,\pi) \to \mathbb R$. Using the Hahn-Banach theorem we can extend $\psi$ further to a norm-contractive linear map on the space $C_\mathbb R(S)$ of continuous real-valued functions on $S$. Since $\psi(1) = \phi(v) =1$ this extension is positive and it follows that there is a Borel probability measure $m$ on $S$ such that $\psi$ is given by integration with respect to $m$. Then
\begin{align*}
&\int_S e^{-\pi(x)} L(\xi,g)(x) \ \mathrm{d} m(x) =  \int_S  L(\sigma(\xi),e^{-\pi}g)(x) \ \mathrm{d} m(x) \\
&= \phi\circ \alpha(\xi,g) = s \phi(\xi,g) = s\int_S  L(\xi,g)(x) \ \mathrm{d} m(x)
\end{align*}
for all $(\xi,g) \in G$. Using that $\overline{L}(\mathbb Q \otimes_\mathbb Z G) = \mathbb Q {L}(G) $ is dense in $\mathcal A(S,\pi)$ it follows that
$$
\int_S e^{-\pi(x)} f(x) \ \mathrm{d} m(x) =  s\int_S  f(x) \ \mathrm{d} m(x)
$$
for all $f \in \mathcal A(S,\pi)$. Let $g \in C_\mathbb R(\pi(S))$. Then $g \circ \pi \in \mathcal A(S,\pi)$ and hence
\begin{align*}
&\int_{\pi(S)} e^{-t} g(t) \ \mathrm{d} m \circ \pi^{-1}(t) = \int_S e^{-\pi(x)} g \circ \pi(x) \ \mathrm{d} m(x) \\
&= s  \int_S  g \circ \pi(x) \ \mathrm{d} m(x) = s\int_{\pi(S)} g(t) \ \mathrm{d} m \circ \pi^{-1}(t).
\end{align*}
Since this holds for all $g \in  C_\mathbb R(\pi(S))$ it follows that $-\log s \in \pi(S)$ and that $m\circ \pi^{-1}$ is concentrated at $\beta = -\log s$, i.e., $m$ is concentrated on $\pi^{-1}(\beta)$. In follows that $\psi$ factorises through $\Aff \pi^{-1}(\beta)$, i.e., $\psi = \tilde{\psi} \circ r$ where $\tilde{\psi} : \Aff \pi^{-1}(\beta) \to \mathbb R$ is linear and $r : \mathcal A(S,\pi) \to \Aff \pi^{-1}(\beta)$ is given by restriction. When $a \in \Aff \pi^{-1}(\beta)$ there is $\tilde{a} \in \mathcal A(S,\pi)$ such that $\tilde{a}|_{\pi^{-1}(\beta)} = a$ by Lemma 2.2 of \cite{BEK2}. If, in addition, $a \geq 0$ and $\epsilon > 0$ it follows from Lemma 2.3 of \cite{BEK2} that we can choose $\tilde{a}$ such that $-\epsilon < \tilde{a}$ in $\mathcal A(S,\pi)$. It follows in this way that $\tilde{\psi}$ is a state on $\Aff \pi^{-1}(\beta)$. Since every state of $\Aff \pi^{-1}(\beta)$ is given by evaluation at a point in $\pi^{-1}(\beta)$ we obtain a point $s \in \pi^{-1}(\beta)$ such that $\tilde{\psi} = \ev_s$. It follows that $\psi = \ev_s$ and hence $\phi = \ev_s \circ L$. The point $s$ is unique because $\mathcal A(S,\pi)$ separates the points of $S$ and $L(G)$ linearly spans a dense set in  $\mathcal A(S,\pi)$. 
\end{proof}

Reformulating Lemma \ref{27-08-21xxx} in terms of $E$, we obtain the following statement.

\begin{lemma}\label{09-10-21i} There is a unique $\gamma$-invariant trace $\tau^0$ on $E \otimes \mathbb K$ with the property that $\tau^0(1\otimes e_{11}) = 1$. It is determined by the condition that $\tau^0_* = \ev_{\overline{o}}\circ L$.
\end{lemma}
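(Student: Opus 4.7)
The plan is to reduce the lemma to Lemma \ref{27-08-21xxx} by passing through $K_0$. The crucial ingredient, recorded just before Lemma \ref{14-10-21}, is that condition (2) of Proposition \ref{30-09-21bxx} is equivalent to bijectivity of the map $\tau \mapsto \tau_*$ from $\mathcal T(E \otimes \mathbb K)$ to the cone $\Hom_+(G, \mathbb R)$; combined with $\gamma_* = \alpha$, this reduces the assertion to the following claim about $(G, G^+, v)$: there is a unique state $\phi$ satisfying $\phi \circ \alpha = \phi$, and it equals $\ev_{\overline{o}} \circ L$.

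For existence I would set $\phi_0 := \ev_{\overline{o}} \circ L$ and check directly that it is an $\alpha$-invariant state of $(G,G^+,v)$. Since $\pi(\overline{o}) = 0$ and every $g \in G_0$ vanishes at $\overline{o}$, this functional reduces to $\phi_0((z_n)_n, g) = \sum_n z_n = \Sigma_0(\xi, g)$, which is positive on $G^+$ (the condition $L(\xi,g) > 0$ on all of $S$ forces $\Sigma_0(\xi,g) = L(\xi,g)(\overline{o}) > 0$), sends $v$ to $1$, and is $\alpha$-invariant because shifting the coordinates of $\xi$ does not change their sum. Lifting $\phi_0$ via the bijection above yields a trace $\tau^0$ on $E \otimes \mathbb K$ with $\tau^0(1 \otimes e_{11}) = \phi_0(v) = 1$, and its $\gamma$-invariance is then automatic because $\tau^0 \circ \gamma$ and $\tau^0$ induce the same functional $\phi_0 \circ \alpha = \phi_0$ on $K_0$.

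Uniqueness is immediate from the same setup: any $\gamma$-invariant trace $\tau$ with $\tau(1 \otimes e_{11}) = 1$ has $\tau_*$ a state of $(G,G^+,v)$ with $\tau_* \circ \alpha = \tau_*$, and Lemma \ref{27-08-21xxx} applied with $s = 1$ (so $\beta = 0$) supplies a unique $s_0 \in \pi^{-1}(0) = \{\overline{o}\}$ with $\tau_* = \ev_{s_0} \circ L = \phi_0$; bijectivity of $\tau \mapsto \tau_*$ then forces $\tau = \tau^0$. The only point that requires any care is invoking the bijection in its form for densely defined lower semicontinuous traces on the stabilised algebra rather than the tracial state version stated literally in Proposition \ref{30-09-21bxx}(2); granted that equivalence, the lemma is a direct translation of Lemma \ref{27-08-21xxx}.
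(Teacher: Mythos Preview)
Your proposal is correct and is precisely the argument the paper has in mind: the paper offers no separate proof but simply declares Lemma \ref{09-10-21i} to be a reformulation of Lemma \ref{27-08-21xxx} in terms of $E$, using the bijection between $\mathcal T(E\otimes\mathbb K)$ and $\Hom_+(G,\mathbb R)$ that follows from condition (2) of Proposition \ref{30-09-21bxx}. You have spelled out exactly that translation, including the explicit check that $\ev_{\overline{o}}\circ L$ is an $\alpha$-invariant state (which the paper leaves implicit); the one small inaccuracy is that the equivalence you cite is recorded inside the proof of Lemma \ref{14-10-21} rather than before it, but this does not affect the argument.
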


\begin{lemma}\label{02-09-21} $\Sigma(K_0(C)^+) = \mathbb N \cup \{0\}$.
\end{lemma}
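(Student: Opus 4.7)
The plan is to identify $\Sigma$ with the map on $K_0$ induced by the unique normalised trace on $C$ obtained by extending $\tau^0$; once this is done, positivity becomes automatic.

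First, by Lemma \ref{30-09-21dx}(A) the $\gamma$-invariant trace $\tau^0$ of Lemma \ref{09-10-21i} lifts uniquely to a trace $\tilde\tau$ on $C$ with $\tilde\tau\circ \iota = \tau^0$, so on $K_0$ we have $\tilde\tau_*\circ \iota_* = \tau^0_*$. The one calculation needed is that $\tau^0_* = \Sigma_0$ as maps on $G$. For $(\xi,g) = ((z_n)_{n\in\mathbb Z}, g) \in G$, Lemma \ref{09-10-21i} gives
$$
\tau^0_*(\xi,g) \;=\; \ev_{\overline o}\circ L(\xi,g) \;=\; g(\overline o) + \sum_{n \in \mathbb Z} z_n\, e^{n\pi(\overline o)}.
$$
Since $g \in G_0 \subseteq \mathcal A_0(S,\pi)$ we have $\overline o \notin \supp g$, hence $g(\overline o) = 0$; and $\pi(\overline o) = 0$ collapses every exponential to $1$. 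What remains is $\sum_n z_n = \Sigma_0(\xi,g)$. Combined with $\iota_* = q$, the defining identity $\Sigma\circ q = \Sigma_0$, and surjectivity of $q$, this forces $\Sigma = \tilde\tau_*$ on all of $K_0(C)$.

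With this identification in hand, the inclusion $\Sigma(K_0(C)^+) \subseteq \mathbb N \cup \{0\}$ is immediate: for any projection $p$ in $M_\infty(C)$ one has $\Sigma([p]) = \tilde\tau(p) \geq 0$, and $\Sigma$ is $\mathbb Z$-valued by construction. For the reverse inclusion, $1\otimes e_{11}$ is a projection in $E\otimes \mathbb K \subseteq C$, so $\iota_*([1\otimes e_{11}]) \in K_0(C)^+$; under the identification of $K_0(E\otimes \mathbb K) = K_0(E)$ with $G$ this class corresponds to $v$, and $\Sigma_0(v) = 1$. Closure of $K_0(C)^+$ under addition then yields every positive integer in the image, and $\Sigma(0) = 0$ completes the equality.

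I do not anticipate any serious obstacle: the only substantive step is the computation $\tau^0_* = \Sigma_0$, and the rest is bookkeeping to keep the successive identifications (Pimsner--Voiculescu, $K_0(E) = G$, $\iota_* = q$, and $\tilde\tau\mapsto \tau^0$) consistent. The conceptual content is simply that the bundle has been built precisely so that the ``sum of indices'' functional on $G$ is realised by a trace, and a trace takes non-negative values on projections.
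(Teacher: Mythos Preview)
Your proof is correct and essentially identical to the paper's. The paper uses $\tau^0 \circ P$ (with $P$ the canonical conditional expectation $C \to E\otimes\mathbb K$) in place of your abstract extension $\tilde\tau$, but these are the same trace, and both arguments hinge on the same computation $\ev_{\overline o}\circ L((z_n),g) = \sum_n z_n$ together with positivity of a trace on $K_0(C)^+$; for the forward inclusion the paper takes $q((n^{(0)},0))$ where you take $n\cdot[1\otimes e_{11}]$, which is the same element.
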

\begin{proof} In the sequel we shall denote by $P$ the canonical conditional expectation $P : C \to E \otimes \mathbb K$. Let $n \in \mathbb N$. Then $(n^{(0)},0) \in G^+$, $q((n^{(0)},0)) \in K_0(C)^+$ and $\Sigma(q( (n^{(0)},0))) = n$. It follows that $\mathbb N \subseteq \Sigma(K_0(C)^+)$. Let $x \in K_0(C)^+$ and write $x = q((z_n)_{n \in \mathbb Z},g)$ for some $((z_n)_{n \in \mathbb Z},g) \in G$. Since $x \in K_0(C)^+$,
$$
\left(\tau^0 \circ P\right)_*(x) \geq 0 ,
$$
where $\tau^0$ is the $\gamma$-invariant trace on $E \otimes \mathbb K$ from Lemma \ref{09-10-21i}.
Since 
\begin{align*}
&\left(\tau^0 \circ P\right)_*(x) = \tau^0_*((z_n)_{n \in \mathbb Z},g)) = \ev_{\overline{o}} \circ L((z_n)_{n \in \mathbb Z},g)) =\sum_{n\in \mathbb Z} z_n \in \mathbb Z,
\end{align*}
it follows that $\Sigma(x) \in \mathbb N$.
\end{proof}

\begin{lemma}\label{02-09-21cx} $K_1(C) = 0$.
\end{lemma}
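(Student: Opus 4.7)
The natural route is the Pimsner-Voiculescu six-term exact sequence for $C=(E\otimes\mathbb K)\rtimes_\gamma\mathbb Z$, which was already invoked above to identify $K_0(C)$ with $G/(\id-\alpha)(G)$. The plan is to use the same sequence one step further. Since $K_1(E\otimes\mathbb K)=K_1(E)=0$ by part~(1) of Proposition~\ref{30-09-21bxx}, the relevant segment of the sequence reduces to
\[
0=K_1(E\otimes\mathbb K)\longrightarrow K_1(C)\stackrel{\partial}{\longrightarrow}K_0(E\otimes\mathbb K)\xrightarrow{\;\id-\alpha_*\;}K_0(E\otimes\mathbb K),
\]
so $K_1(C)$ embeds as $\ker(\id-\alpha:G\to G)$. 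Thus the whole task is to check that $\id-\alpha$ is injective on $G$.

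For this I would simply unwind the definition of $\alpha$ on $G=\bigl(\bigoplus_{\mathbb Z}\mathbb Z\bigr)\oplus G_0$. If $(\xi,f)\in G$ satisfies $(\id-\alpha)(\xi,f)=0$, then separately $\xi=\sigma(\xi)$ in $\bigoplus_{\mathbb Z}\mathbb Z$ and $(1-e^{-\pi})f=0$ in $G_0$. The first equation forces $\xi_n=\xi_{n+1}$ for every $n$, and because elements of $\bigoplus_{\mathbb Z}\mathbb Z$ have finite support this gives $\xi=0$. For the second, the function $1-e^{-\pi(x)}$ vanishes precisely on $\pi^{-1}(0)=\{\overline{o}\}$, so $(1-e^{-\pi})f=0$ means that $f$ is supported in $\{\overline{o}\}$; but by the very definition of $G_0\subseteq\mathcal A_0(S,\pi)$ we have $\overline{o}\notin\supp f$, so $f=0$ as well.

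Combining these two observations yields $\ker(\id-\alpha)=0$, hence $K_1(C)=0$. There is no real obstacle here; the only point worth double-checking is the orientation of the boundary map in the Pimsner-Voiculescu sequence, which must be consistent with the $K_0$-computation already used to deduce $K_0(C)=G/(\id-\alpha)G$. Everything else is a one-line unpacking of the definition of $\alpha$.
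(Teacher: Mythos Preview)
Your proof is correct and follows essentially the same route as the paper: invoke the Pimsner--Voiculescu sequence (using $K_1(E)=0$) to reduce to injectivity of $\id-\alpha$ on $G$, then check the two components separately via the finite-support constraint on $\bigoplus_{\mathbb Z}\mathbb Z$ and the fact that elements of $G_0$ are supported away from $\overline{o}$. The paper's argument is the same, only more tersely stated.
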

\begin{proof} To establish this from the Pimsner-Voiculescu exact sequence, \cite{PV}, we must show that $\id -\alpha$ is injective, which is easy: If $\xi \in \bigoplus_\mathbb Z \mathbb Z$ and $\sigma(\xi) = \xi$ it follows immediately that $\xi =0$, and if $g \in G_0$ and $e^{-\pi}g = g$ it follows that $g =0$ because $g$ is supported away from $\overline{o}$.
\end{proof}

Let $p = 1 \otimes e_{11} \in E \otimes \mathbb K$.

\begin{lemma}\label{07-10-21} $pCp$ is $*$-isomorphic to the Jiang-Su algebra $\mathcal Z$.
\end{lemma}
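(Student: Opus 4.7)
The plan is to establish $pCp \cong \mathcal{Z}$ by verifying that $pCp$ has the Elliott invariant of $\mathcal{Z}$ and belongs to the classifiable class $\mathcal N_1$ of \cite{EGLN}, and then invoking the classification theorem (Theorem 2.7 of \cite{EGLN}) exactly as in the proof of Lemma \ref{14-10-21}.

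First I would collect the structural properties of $pCp$. Since $C$ is simple and $p \neq 0$, the hereditary subalgebra $pCp$ is simple, separable, and unital with unit $p$. For nuclearity, $E$ is an inductive limit of building blocks (hence of type I algebras) so $E$ is nuclear; nuclearity passes to $E \otimes \mathbb K$, to the crossed product $C = (E\otimes \mathbb K)\rtimes_\gamma \mathbb Z$ since $\mathbb Z$ is amenable, and finally to the hereditary subalgebra $pCp$. The UCT transfers along the same chain: $E$ satisfies it by the argument in Lemma \ref{14-10-21} (via 22.3.5(e) and 23.1.1 of \cite{Bl}), and it is preserved by tensoring with $\mathbb K$, crossed products by $\mathbb Z$, and hereditary subalgebras. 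Finally, $C$ is $\mathcal Z$-stable by condition (B) of Lemma \ref{30-09-21dx}, and this descends to $pCp$ by Corollary 3.1 of \cite{TW}.

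Next I would compute the Elliott invariant. Because $p$ is a nonzero projection in a simple $C^*$-algebra, it is full, so Brown's theorem gives $pCp \otimes \mathbb K \cong C \otimes \mathbb K$ and hence $K_i(pCp) \cong K_i(C)$. Lemma \ref{02-09-21cx} yields $K_1(pCp) = 0$, and Lemma \ref{02-09-21} together with $\Sigma$ identifies $(K_0(pCp), K_0(pCp)^+)$ with $(\mathbb Z, \mathbb N \cup \{0\})$; under this identification $[1_{pCp}] = [p] = q(v)$ maps to $\Sigma_0(v) = 1$. For traces, any tracial state $\tau$ on $pCp$ extends uniquely (fullness of $p$) to a densely defined lower semicontinuous trace $\hat\tau$ on $C$ with $\hat\tau(p)=1$; by condition (A) of Lemma \ref{30-09-21dx}, $\hat\tau|_{E\otimes \mathbb K}$ is a $\gamma$-invariant trace with value $1$ at $p$, and Lemma \ref{09-10-21i} forces it to be $\tau^0$. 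Hence $pCp$ has a unique tracial state, so its Elliott invariant agrees with that of $\mathcal Z$.

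Finally I would invoke classification. Being simple, separable, unital, nuclear, and $\mathcal Z$-stable, $pCp$ has finite decomposition rank by Theorem A of \cite{ENST}; together with the UCT, Theorem 1.1 of \cite{EGLN} places $pCp$ in the class $\mathcal N_1$. Since $\mathcal Z$ itself lies in $\mathcal N_1$ and its Elliott invariant coincides with that of $pCp$, Theorem 2.7 of \cite{EGLN} produces a $*$-isomorphism $pCp \to \mathcal Z$. The only nontrivial step is the verification that $pCp$ belongs to $\mathcal N_1$, since (unlike in Lemma \ref{14-10-21}) $pCp$ is not presented to us as an inductive limit of building blocks; this is resolved by the abstract characterization of $\mathcal N_1$ in \cite{EGLN} together with \cite{ENST}, and everything else is assembly of the preceding lemmas.
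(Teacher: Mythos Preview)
Your overall outline matches the paper's: compute the Elliott invariant of $pCp$ via Lemmas \ref{02-09-21}, \ref{02-09-21cx} and \ref{09-10-21i}, verify $\mathcal Z$-stability, nuclearity and the UCT, and then invoke a classification theorem. The Elliott-invariant computation and the structural properties are handled exactly as in the paper (the paper also cites Proposition 4.7 of \cite{CP} for the passage from tracial states on $pCp$ to traces on $C$, which you assert from fullness without reference; this is minor).

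There is, however, a genuine gap in your final step. You write that ``being simple, separable, unital, nuclear, and $\mathcal Z$-stable, $pCp$ has finite decomposition rank by Theorem A of \cite{ENST}.'' Theorem A of \cite{ENST} bounds the decomposition rank of \emph{approximately subhomogeneous} $C^*$-algebras; it was applicable in Lemma \ref{14-10-21} only because $E_1$ and $E_2$ had been identified (via Lemma 3.19 of \cite{GLN1}) as inductive limits of building blocks. Here $pCp$ is a corner of a crossed product $(E\otimes\mathbb K)\rtimes_\gamma\mathbb Z$ and is not presented as ASH, so the hypotheses of \cite{ENST} are not available and your route into $\mathcal N_1$ via \cite{EGLN} does not close as written.

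The paper avoids this by taking a slightly different classification route: once one knows that $pCp$ and $\mathcal Z$ are both simple, unital, nuclear, $\mathcal Z$-stable, satisfy the UCT and have a unique tracial state, Corollary 6.2 of \cite{MS2} together with Corollary 4.6 of \cite{Ro2} already yields the $*$-isomorphism (see also Remark 4.12 of \cite{Th3}). In effect the monotracial hypothesis is what supplies the needed finiteness of decomposition rank, via \cite{MS2} rather than \cite{ENST}. If you replace your appeal to \cite{ENST} by \cite{MS2}, your argument goes through; as stated, it does not.
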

\begin{proof} By combining (A) of Lemma \ref{30-09-21dx} with Proposition 4.7 of \cite{CP} and Lemma \ref{09-10-21i} above it follows that $pCp$ has exactly one trace state. By Lemma \ref{02-09-21} the isomorphism $\Sigma : K_0(C) \to \mathbb Z$ is an isomorphism of ordered groups. Note that $\Sigma([p]) = 1$. Since $(K_0(pCp), K_0(pCp)^+) = (K_0(C), K_0(C)^+)$ and $K_1(pCp) = K_1(C) = 0$ by Lemma \ref{02-09-21cx} we see that $pCp$ has the same Elliott invariant as $\mathcal Z$. By (B) of Lemma \ref{30-09-21dx} $C$ is $\mathcal Z$-stable and so by 3.1 of \cite{TW}, $pCp$ is $\mathcal Z$-stable. Thus, $pCp$ and $\mathcal Z$ are both simple, unital, nuclear, $\mathcal Z$-stable, and satisfy the UCT (see 23.1.1 and 22.3.5 of \cite{Bl}), and have a unique tracial state. By Corollary 6.2 of \cite{MS2} and Corollary 4.6 of \cite{Ro2} they are $*$-isomorphic. See Remark 4.12 of \cite{Th3}.
\end{proof}

\subsection{Flows on the Jiang-Su algebra}

We consider the dual action on $C = (E \otimes \mathbb K)  \rtimes_\gamma \mathbb Z$ as a $2\pi$-periodic flow and we denote by $\theta$ the restriction of this flow to $pCp$.

\begin{lemma}\label{02-09-21dx} The KMS bundle $(S^\theta,\pi^\theta)$ of $\theta$ is isomorphic to $(S,\pi)$.
\end{lemma}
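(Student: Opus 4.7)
I would construct an explicit isomorphism $\Phi:(S,\pi)\to(S^\theta,\pi^\theta)$ by following the standard KMS dictionary for the dual action on a $\mathbb Z$-crossed product. For $s\in S$, set $\phi_s=\ev_s\circ L\in S(G)$ and let $\tau_s$ be the corresponding trace on $E\otimes\mathbb K$ coming from property~(2) of Proposition~\ref{30-09-21bxx}, normalised so that $\tau_s(p)=\phi_s(v)=1$. The key identity $L\circ\alpha=e^{-\pi(\cdot)}L$, which is implicit in the proof of Lemma~\ref{27-08-21xxx}, combined with the bijectivity of the trace--$K_0$ pairing, forces
$$\tau_s\circ\gamma \;=\; e^{-\pi(s)}\tau_s,$$
so $\tau_s$ is $\gamma$-quasi-invariant with scaling factor $e^{-\pi(s)}$; the case $\pi(s)=0$, $s=\overline o$ recovers the trace $\tau^0$ of Lemma~\ref{09-10-21i}.

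Next, I would invoke the classical correspondence between $\gamma$-quasi-invariant traces on $A=E\otimes\mathbb K$ and KMS weights on $C=A\rtimes_\gamma\mathbb Z$ for the dual flow: because $\tau_s$ satisfies $\tau_s\circ\gamma=e^{-\pi(s)}\tau_s$, it extends uniquely to a lower-semicontinuous $\pi(s)$-KMS weight $\hat\tau_s$ on $C$ (the KMS analogue of Lemma~\ref{30-09-21dx}(A) and Proposition~4.7 of \cite{CP}; equivalently the Olesen--Pedersen dual-weight construction). The restriction of $\hat\tau_s$ to the corner $pCp$ is a bounded $\pi(s)$-KMS functional of norm $\tau_s(p)=1$, hence a state $\omega_s\in S^\theta_{\pi(s)}$. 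This defines $\Phi(s)=(\omega_s,\pi(s))$, and $\pi^\theta\circ\Phi=\pi$ by construction.

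For the inverse, given $(\omega,\beta)\in S^\theta$ I would extend $\omega$ to a $\beta$-KMS weight $\tilde\omega$ on $C$ (using that $pCp$ is a full hereditary $\theta$-invariant corner), then restrict to $A$ to obtain a nonzero $\gamma$-quasi-invariant trace $\tau$ with $\tau\circ\gamma=e^{-\beta}\tau$ and $\tau(p)=1$. The trace $\tau$ pairs with $K_0(E)=G$ to give a state $\phi\in S(G)$ satisfying $\phi\circ\alpha=e^{-\beta}\phi$; Lemma~\ref{27-08-21xxx} then yields a unique $s\in\pi^{-1}(\beta)$ with $\phi=\ev_s\circ L$, and the uniqueness of the KMS-weight extension guarantees that this $s$ is $\Phi^{-1}(\omega,\beta)$. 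Because every intermediate step is affine in the trace, $\Phi$ restricts to an affine bijection on each fibre $\pi^{-1}(\beta)\to S^\theta_\beta$.

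Finally, continuity of $s\mapsto\phi_s$ in the weak* topology is immediate from the explicit formula $\phi_s(\xi,g)=g(s)+\sum_n z_ne^{n\pi(s)}$, and this propagates through the (weak*-to-weak*) continuous correspondences to give continuity of $\Phi$; combined with the compatibility with the proper projections $\pi,\pi^\theta$, this makes $\Phi$ a homeomorphism, hence an isomorphism of proper simplex bundles. The principal technical obstacle is setting up carefully the KMS-weight/quasi-invariant-trace bijection between $C$ and $A$---specifically, the extension of a KMS state on the unital corner $pCp$ to a KMS weight on the nonunital algebra $C$, and the boundedness of the restriction $\hat\tau_s|_{pCp}$---which is where the argument leans most heavily on the references already cited for Lemma~\ref{30-09-21dx}(A) and the standard theory of dual flows on $\mathbb Z$-crossed products.
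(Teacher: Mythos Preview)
Your proposal is correct and follows essentially the same route as the paper's proof: build the bijection between $S$ and $S^\theta$ via the chain $s\leftrightarrow \ev_s\circ L\leftrightarrow \tau_s\leftrightarrow \hat\tau_s\leftrightarrow \omega_s$, using Lemma~\ref{27-08-21xxx} for the identification of quasi-invariant states on $G$ with points of $S$, and the quasi-invariant-trace/KMS-weight correspondence for the dual action on the crossed product. The only cosmetic difference is that the paper defines the map in the direction $S^\theta\to S$ first and then writes down the explicit inverse formula $\xi^{-1}(s)=\bigl((\tau^{-1}(\ev_s\circ L)\otimes\Tr_{\mathbb K})\circ P|_{pCp},\,\pi(s)\bigr)$ to read off continuity, whereas you start from $S$; and where you gesture at ``the Olesen--Pedersen dual-weight construction'' and a KMS analogue of Lemma~\ref{30-09-21dx}(A), the paper cites the specific statements Remark~3.3 of \cite{LN} (extension of a KMS state on the full corner $pCp$ to a KMS weight on $C$) and Lemma~3.1 of \cite{Th3} (the bijection between $\beta$-KMS weights for the dual action and traces $\tau$ on $E\otimes\mathbb K$ with $\tau\circ\gamma=e^{-\beta}\tau$). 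One small point: your final sentence appeals to properness of $\pi,\pi^\theta$ to upgrade the continuous bijection $\Phi$ to a homeomorphism, but what is actually being used is simply that $S$ is compact and $S^\theta$ is Hausdorff.
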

\begin{proof} Let $(\omega,\beta) \in S^\theta$. By Remark 3.3 of \cite{LN} $\omega$ extends uniquely to a $\beta$-KMS weight $\widehat{\omega}$ on $C$, and by Lemma 3.1 of \cite{Th3} the restriction $\widehat{\omega}|_{E \otimes \mathbb K}$ is a trace on $E \otimes \mathbb K$ such that $\widehat{\omega}|_{E \otimes \mathbb K} \circ \gamma = e^{-\beta} \widehat{\omega}|_{E \otimes \mathbb K}$. Since $\widehat{\omega}(1 \otimes e_{11}) = 1$ it follows from Lemma \ref{27-08-21xxx} that $\left(\widehat{\omega}|_{E \otimes \mathbb K}\right)_* = \ev_s \circ L\in S(G)$ for some $s \in \pi^{-1}(\beta)$. This gives us a map $\xi : S^{\theta} \to S$ such that $\xi(\omega,\pi) =s$. Note that $\pi \circ \xi = \pi^{\theta}$. To see that the map is surjective, let $s \in S$ and set $\beta = \pi(s)$. Then $\ev_s \circ L \in S(G)$ and $\ev_s \circ L \circ \alpha= e^{-\beta} \ev_s \circ L$ so Lemma \ref{09-10-21i} gives us a trace $\rho$ on $E \otimes \mathbb K$ such that $\rho _*= \ev_s \circ L$ and $\rho \circ \gamma = e^{-\beta} \tau$. By Lemma 3.1 of \cite{Th3} the weight $\rho \circ P$ is a $\beta$-KMS weight for the dual action on $C$. Since $\rho \circ P(1\otimes e_{11}) = \rho_*(v) = \ev_s \circ L(v) = 1$, the restriction of $\rho\circ P$ to $pCp$ is $\beta$-KMS state for $\theta$ and it is clear that  $\xi(\rho \circ P|_{pCp},\beta) = s$. To see that the map is injective, let $(\omega^i,\beta^i), i = 1,2$, be elements of $S^\theta$ such that $\xi(\omega^1,\beta^1) = \xi(\omega^2,\xi^2) =s$. Since $s \in \pi^{-1}(\beta^1) \cap \pi^{-1}(\beta^2)$ it follows that $\beta^1 = \beta^2 = \beta$. Since $\left(\widehat{\omega^1}|_{E \otimes \mathbb K}\right)_* = \ev_s \circ L =  \left(\widehat{\omega^2}|_{E \otimes \mathbb K}\right)_*$ it follows by Proposition \ref{30-09-21bxx} that $\widehat{\omega^1}|_{E \otimes \mathbb K} = \widehat{\omega^2}|_{E \otimes \mathbb K}$. By Lemma 3.1 of \cite{Th3} this implies that $\widehat{\omega^1} = \widehat{\omega^2}$ and hence that $\omega^1 = \omega^2$. It follows that $\xi$ is a bijection. By Proposition \ref{30-09-21bxx} above and Lemma 3.1 of \cite{Th3} we have the following formula for the inverse $\xi^{-1}$:
$$
\xi^{-1}(s) = \left( \left( \tau^{-1}(ev_s \circ L) \otimes \Tr_{\mathbb K}\right) \circ P|_{pCp}, \ \pi(s)\right)  .
$$
It follows that $\xi^{-1}$ is continuous and therefore a homeomorphism.

\end{proof}

Combining Lemma \ref{07-10-21} and Lemma \ref{02-09-21dx} we obtain our main result:

\begin{thm}\label{30-09-21e} Let $(S,\pi)$ be a compact simplex bundle such that $\pi^{-1}(0)$ consists of exactly one point. There is a $2 \pi$-periodic flow $\theta$ on the Jiang-Su algebra whose KMS bundle is isomorphic to $(S,{\pi})$.
\end{thm}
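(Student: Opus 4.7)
The plan is to combine Lemma \ref{07-10-21} and Lemma \ref{02-09-21dx} directly, after pinning down the particular flow on $\mathcal Z$ that they are talking about. All the technical construction has been done in the preceding lemmas, so the proof of the theorem itself is an assembly step.

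To introduce the flow, I would recall the setup $C = (E\otimes\mathbb K)\rtimes_\gamma \mathbb Z$ with $p = 1 \otimes e_{11} \in E\otimes\mathbb K \subset C$. The dual action associated to the crossed product by $\mathbb Z$ is an $\mathbb R$-action factoring through the dual group $\widehat{\mathbb Z} = \mathbb T$, and so is $2\pi$-periodic. Since $E\otimes\mathbb K$ is the fixed-point algebra of the dual action, the projection $p$ is fixed pointwise, so the dual action restricts to a $2\pi$-periodic flow $\theta$ on the corner $pCp$. This is the flow already designated as $\theta$ in the subsection on flows on the Jiang--Su algebra.

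By Lemma \ref{07-10-21}, $pCp$ is $*$-isomorphic to $\mathcal Z$, so $\theta$ may be regarded as a $2\pi$-periodic flow on the Jiang--Su algebra. By Lemma \ref{02-09-21dx}, the KMS bundle $(S^\theta,\pi^\theta)$ of $\theta$ is isomorphic to $(S,\pi)$. This yields the theorem.

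There is no genuine obstacle remaining in this final step; the real work has been absorbed into the preceding lemmas. The hardest earlier step was the construction of the automorphism $\gamma \in \Aut(E \otimes \mathbb K)$ with $\gamma_* = \alpha$ and satisfying properties (A) and (B) of Lemma \ref{30-09-21dx}, which combines the classification theorem of \cite{EGLN} for algebras in $\mathcal N_1$ with the Sato--Matui refinement of \cite{Sa}, \cite{MS2}, \cite{MS3}. The other substantive input, the identification of $\gamma$-invariant tracial data with eigenvalue-type data for $\alpha$ and hence with points of $S$, was carried out in Lemma \ref{27-08-21xxx} and Lemma \ref{09-10-21i}, and is what drives the KMS-bundle identification in Lemma \ref{02-09-21dx}. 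By this point, everything required for the theorem has already been established.
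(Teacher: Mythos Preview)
Your proposal is correct and matches the paper's own proof exactly: the theorem is obtained by combining Lemma \ref{07-10-21} and Lemma \ref{02-09-21dx}, with the flow $\theta$ being the restriction to $pCp$ of the dual action on $C$. The additional commentary you provide on where the real work was done is accurate but not part of the paper's (one-line) proof of the theorem itself.
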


From Theorem \ref{30-09-21e} we obtain the following corollary; see Corollary 3.2 of \cite{ET}.

\begin{cor}\label{10-10-21} Let $K$ be a compact subset of real numbers containing $0$.
\begin{itemize}
\item There is a $2 \pi$-periodic flow on the Jiang-Su algebra whose KMS spectrum is $K$ and such that there is a unique $\beta$-KMS state for all $\beta \in K$.
\item There is a $2\pi$-periodic flow $\theta$ on the Jiang-Su algebra whose KMS spectrum is $K$ and such that $S^\theta_\beta$ is not affinely homeomorphic to $S^\theta_{\beta'}$ when $\beta,\beta' \in K \backslash \{0\}$ and $\beta \neq \beta'$.
\end{itemize}
\end{cor}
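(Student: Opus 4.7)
The plan is to reduce both statements to Theorem \ref{30-09-21e} by producing, in each case, a compact simplex bundle $(S,\pi)$ with $\pi(S)=K$ and $\pi^{-1}(0)$ a single point. For the first statement, I would take $S=K$ with $\pi:K\to\mathbb R$ the inclusion map. Each nonempty fiber is a singleton, hence trivially a Choquet simplex, and $S=K$ is compact, so $(S,\pi)$ is a compact simplex bundle with $\pi^{-1}(0)=\{0\}$. Theorem \ref{30-09-21e} then supplies a $2\pi$-periodic flow $\theta$ on $\mathcal Z$ whose KMS bundle is isomorphic to $(K,\iota)$, so its KMS spectrum is $K$ and every $S^\theta_\beta$ is a single point.

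For the second statement, I would fix an injective assignment $\beta\mapsto \Delta_\beta$ from $K\setminus\{0\}$ into the affine-homeomorphism classes of compact metrizable Choquet simplices, so that $\Delta_\beta$ and $\Delta_{\beta'}$ are not affinely homeomorphic whenever $\beta\neq\beta'$. Such a family exists because there are $2^{\aleph_0}$ many such affine-homeomorphism classes, whereas $K$, being a compact metric space, has cardinality at most $2^{\aleph_0}$; a concrete supply is given for instance by the Bauer simplices whose extreme boundaries form a family of pairwise non-homeomorphic compact metrizable spaces indexed by $K\setminus\{0\}$.

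The main task, which I expect to be the principal obstacle, is to assemble the $\Delta_\beta$ into a single compact metrizable bundle. My strategy would be to embed each $\Delta_\beta$ as a face of the Poulsen simplex $\mathcal P$, fix a base point $p_0\in\mathcal P$, and set
\[
S=\bigl(\{p_0\}\times\{0\}\bigr)\cup\bigcup_{\beta\in K\setminus\{0\}}\Delta_\beta\times\{\beta\}\subseteq \mathcal P\times K,
\]
with $\pi$ the projection onto the second coordinate. The delicate point is to choose the embeddings so that $S$ is closed in $\mathcal P\times K$, which amounts to upper semicontinuity of the fiber assignment $\beta\mapsto\Delta_\beta$ in the Vietoris topology; I would handle this in the spirit of Corollary 3.2 of \cite{ET}, arranging a controlled collapse of the chosen faces onto $\{p_0\}$ as $\beta\to 0$ in $K$. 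Once $(S,\pi)$ has been produced, Theorem \ref{30-09-21e} yields a $2\pi$-periodic flow on $\mathcal Z$ whose KMS bundle is isomorphic to $(S,\pi)$, and the desired non-affine-homeomorphism property then follows from the choice of the $\Delta_\beta$.
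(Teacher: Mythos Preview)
Your reduction is exactly the one the paper uses: both parts follow from Theorem \ref{30-09-21e} once one exhibits a compact simplex bundle $(S,\pi)$ over $\mathbb R$ with $\pi(S)=K$, $\pi^{-1}(0)$ a singleton, and the appropriate fibers, and the paper simply points to Corollary 3.2 of \cite{ET} for the existence of such bundles. Your sketch for the second item (embedding the $\Delta_\beta$ as faces of a common simplex and arranging Vietoris upper semicontinuity toward a base point over $0$) is in the same spirit as that reference; the paper does not spell this out but defers entirely to \cite{ET}, so your proposal is aligned with, and somewhat more explicit than, the paper's own argument.
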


 \section{Flows that are not approximately inner}
 
 In this section we shall use Theorem \ref{30-09-21e} to show that flows that are not approximately inner exist in abundance.

\begin{thm}\label{21-10-21a}
Let $A$ be a unital separable $C^*$-algebra with a unique trace state. Suppose that 
$A$ absorbs the Jiang-Su algebra tensorially. 
Then for any compact simplex bundle $(S, \pi)$ with $\pi^{-1}(0)$ consisting of exactly one point,
there exists a $2\pi$-periodic flow on $A$ whose KMS bundle is isomorphic to $(S, \pi)$.
\end{thm}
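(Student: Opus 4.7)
The plan is to bootstrap from Theorem~\ref{30-09-21e} by tensoring. First apply Theorem~\ref{30-09-21e} to obtain a $2\pi$-periodic flow $\theta'$ on the Jiang-Su algebra $\mathcal{Z}$ whose KMS bundle is isomorphic to $(S,\pi)$. Fix an isomorphism $A \cong A \otimes \mathcal{Z}$ supplied by $\mathcal{Z}$-absorption and set $\theta_t := \id_A \otimes \theta_t'$; this is manifestly a $2\pi$-periodic flow on $A$, and what remains is to identify its KMS bundle with $(S,\pi)$.

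Let $\tau_A$ denote the unique tracial state of $A$. One direction is routine: for any $\beta$-KMS state $\psi$ of $\theta'$, the product state $\tau_A \otimes \psi$ is a $\beta$-KMS state for $\theta$, because $\tau_A$ is tracial and $A \otimes 1$ is pointwise fixed by $\theta$, so that the KMS condition on simple tensors reduces to the KMS condition for $\psi$.

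For the converse, the key observation is that $A \otimes 1 \subset (A \otimes \mathcal{Z})^\theta$, hence $A \otimes 1$ lies in the centralizer of any $\beta$-KMS state $\omega$ of $\theta$. In particular, for each $b \in \mathcal{Z}$, the functional $a \mapsto \omega(a \otimes b)$ on $A$ vanishes on commutators. Its real and imaginary parts are hermitian tracial functionals; by Jordan decomposition each is a real linear combination of positive tracial functionals, and every positive tracial functional on $A$ must be a nonnegative scalar multiple of $\tau_A$ by the uniqueness hypothesis. It follows that $\omega(a \otimes b) = \omega(1 \otimes b)\,\tau_A(a)$ for all $a \in A$ and $b \in \mathcal{Z}$, so $\omega = \tau_A \otimes \psi$ with $\psi := \omega(1 \otimes \cdot )$. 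A short calculation on analytic elements of the form $a \otimes b$ with $b$ analytic for $\theta'$ then shows that $\psi$ is a $\beta$-KMS state for $\theta'$. The resulting mutually inverse assignments $\psi \mapsto \tau_A \otimes \psi$ and $\omega \mapsto \omega|_{1 \otimes \mathcal{Z}}$ are visibly continuous and intertwine the projections to $\mathbb{R}$, so they assemble to an isomorphism of KMS bundles $(S^\theta, \pi^\theta) \cong (S^{\theta'}, \pi^{\theta'}) \cong (S,\pi)$.

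The main obstacle is the product-state factorization $\omega = \tau_A \otimes \psi$ in the converse direction: one must rule out ``entangled'' KMS states. This is precisely where the unique-trace hypothesis on $A$ is essential --- it is what forces every positive tracial functional on $A$ to be proportional to $\tau_A$, and without it one would a priori expect the KMS bundle of $\theta$ to be substantially larger than that of $\theta'$.
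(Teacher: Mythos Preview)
Your argument is correct and follows essentially the same route as the paper: tensor the flow on $\mathcal Z$ from Theorem~\ref{30-09-21e} with $\id_A$, use that $A\otimes 1$ lies in the fixed-point algebra (hence in the centralizer of every KMS state) together with uniqueness of the trace on $A$ to force every KMS state to factor as $\tau_A\otimes\psi$, and then transport via $A\cong A\otimes\mathcal Z$. The only cosmetic difference is that you spell out the Jordan decomposition step where the paper simply asserts that a bounded trace functional on a monotracial $C^*$-algebra is a scalar multiple of the trace.
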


\begin{proof} Let $\mathcal Z$ be the Jiang-Su algebra. By Theorem \ref{30-09-21e} there is a flow $\theta$ on $\mathcal Z$ whose KMS bundle $(S^\theta,\pi^\theta)$ is isomorphic to $(S,\pi)$. Set
$$
\theta'_t = \id_A \otimes \theta_t
$$
to get a flow $\theta'$ on $A \otimes \mathcal Z$. Let $\tau$ be the trace state of $A$. Then
\begin{equation}\label{21-10-21}
\omega \mapsto \tau \otimes \omega
\end{equation}
defines a continuous map $S^\theta \to S^{\theta'}$ of KMS bundles which is clearly injective. To show that it is surjective, let $\omega'$ be a $\beta$-KMS state for $\theta'$. Let $a_i \in A, i = 1,2, \ b \in \mathcal Z$. Since $a_1 \otimes 1$ is fixed by $\theta'$ we find that
$$
\omega'(a_1a_2\otimes b) = \omega'((a_1 \otimes 1)(a_2 \otimes b)) = \omega'((a_2\otimes b)(a_1 \otimes 1)) = \omega'(a_2a_1 \otimes b) .
$$
Thus, $a \mapsto \omega'(a\otimes b)$ is a bounded trace functional on $A$ and hence $\omega'(a\otimes b) = \tau(a)\omega(b)$ for all $a \in A$ and some $\omega(b) \in \mathbb C$. It is straightforward to see that $b \mapsto \omega(b)$ is a state on $\mathcal Z$ and in fact a $\beta$-KMS state for $\theta$ since $\omega'$ is a $\beta$-KMS state for $\theta'$. This shows that the map \eqref{21-10-21} is surjective and hence a homeomorphism of KMS bundles. By moving the flow $\theta'$ onto $A$ via a $*$-isomorphism $A \simeq A \otimes \mathcal Z$, we obtain the desired flow on $A$.

\end{proof}

 For the statement of the following corollary we say that two flows $\alpha$ and $\mu$ on the same unital $C^*$-algebras $A$ are \emph{weakly cocycle-conjugate} when there are an automorphism $\gamma \in \Aut A$, a non-zero real number $\lambda \in \mathbb R \backslash \{0\}$ and continuous path $\{u_t\}_{t \in \mathbb R}$ of unitaries in $A$ such that
\begin{itemize}
\item[{}] $u_s\alpha_{\lambda s}(u_t) = u_{s+t}$ and
\item[{}] $\gamma \circ \mu_t \circ \gamma^{-1} = \Ad u_t \circ \alpha_{\lambda t}$
\end{itemize}
for all $s,t \in \mathbb R$. Weak cocycle-conjugacy is an equivalence relation on flows. Recall from \cite{PS} that a flow $\alpha$ is \emph{approximately inner} when there is a sequence $\{h_n\}$ of self-adjoint elements of $A$ such that 
$$
\lim_{n \to \infty} \left\|\alpha_t(a) - \Ad e^{ith_n}(a)\right\| = 0
$$
uniformly on compact subsets of $\mathbb R$ for all $a \in A$. It follows from Corollary 1.4 of \cite{Ki2} that a flow $\alpha$ is approximately inner if and only if all flows weakly cocycle-conjugate to $\alpha$ are approximately inner.

\begin{cor}\label{21-10-21b} Let $A$ be a unital separable $C^*$-algebra with a unique trace state. Suppose that 
$A$ absorbs the Jiang-Su algebra tensorially. There are uncountably many weak cocycle-conjugacy classes of flows on $A$ that are not approximately inner.
\end{cor}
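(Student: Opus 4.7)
The plan is to combine Theorem~\ref{21-10-21a} with the standard obstruction that an approximately inner flow on a unital $C^*$-algebra with a unique tracial state must admit KMS states at every inverse temperature, a property easily destroyed by choosing bundles with finite KMS spectrum.

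First I would exhibit an uncountable family of compact simplex bundles pairwise inequivalent up to rescaling of the base $\mathbb R$: for each real $\lambda > 1$, take $(S_\lambda, \pi_\lambda)$ to be the discrete three-point bundle $S_\lambda = \{s_0, s_1, s_\lambda\}$ with $\pi_\lambda(s_i) = i$. Each fiber is a singleton (trivially a Choquet simplex), and $\pi_\lambda^{-1}(0)$ is one point. A rescaling $\beta \mapsto c\beta$ (with $c \neq 0$) takes $\{0,1,\lambda\}$ to $\{0, c, c\lambda\}$; matching this to $\{0, 1, \lambda'\}$ with $\lambda' > 1$ forces $c = 1$ and $\lambda = \lambda'$, since the alternative $c = \lambda'$, $c\lambda = 1$ would give $\lambda' = 1/\lambda < 1$. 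By Theorem~\ref{21-10-21a}, for each such $\lambda$ there is a $2\pi$-periodic flow $\theta^\lambda$ on $A$ whose KMS bundle is isomorphic to $(S_\lambda, \pi_\lambda)$.

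Next I would note that weak cocycle-conjugacy preserves the KMS bundle up to rescaling of the base: if $\gamma \circ \mu_t \circ \gamma^{-1} = \Ad u_t \circ \alpha_{\lambda t}$, then the cocycle perturbation $\Ad u \circ \alpha_{\lambda\cdot}$ has its $\beta$-KMS simplex affinely homeomorphic to that of $\alpha_{\lambda\cdot}$ at each $\beta$, $\alpha_{\lambda\cdot}$ has $\beta$-KMS states equal to the $(\lambda\beta)$-KMS states of $\alpha$, and conjugation by $\gamma$ is a bundle isomorphism. Consequently the flows $\theta^\lambda$, $\lambda > 1$, represent pairwise distinct weak cocycle-conjugacy classes.

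The main obstacle is showing that no $\theta^\lambda$ is approximately inner. Suppose $\theta = \theta^\lambda$ were approximately inner via self-adjoint $h_n \in A$ with $\Ad e^{ith_n}(a) \to \theta_t(a)$ uniformly for $t$ in compact sets, for every $a \in A$. For every $\beta \in \mathbb R$, the Gibbs state
\[
\omega_n^\beta(a) = \tau(e^{-\beta h_n} a)/\tau(e^{-\beta h_n})
\]
is a $\beta$-KMS state for the inner flow $\Ad e^{ith_n}$, as one checks directly using the trace property of the unique trace $\tau$. Pick a weak*-convergent subsequence $\omega_{n_k}^\beta \to \omega^\beta$; by the standard continuity of the $\beta$-KMS condition under uniform-on-compacts convergence of flows --- formulated via bounded holomorphic functions on a strip, cf.~\cite{BR} --- the limit $\omega^\beta$ is a $\beta$-KMS state for $\theta$. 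Hence $\pi^\theta(S^\theta) = \mathbb R$, contradicting $\pi^{\theta^\lambda}(S^{\theta^\lambda}) = \{0, 1, \lambda\}$. The delicate point is precisely this continuity of the KMS condition under uniform convergence of flows; once granted, the corollary follows.
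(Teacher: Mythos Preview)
Your proposal is correct. The approach differs from the paper's in two respects.

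First, the paper distinguishes flows by the \emph{shape} of a single fiber rather than by the shape of the KMS spectrum: for each compact metric space $X$ it invokes Theorem~\ref{21-10-21a} to produce a flow $\theta^X$ with KMS spectrum $\{0,1\}$ and $S^{\theta^X}_1$ affinely homeomorphic to the simplex $M(X)$ of Borel probability measures on $X$. Since cocycle-conjugacy makes the fibers $S^\alpha_\beta$ and $S^\mu_\beta$ strongly affinely isomorphic (citing \cite{Ki2} and \cite{Th2}), and $M(X)$ determines $X$ up to homeomorphism, the flows $\theta^X$ are pairwise not weakly cocycle-conjugate. Your three-point bundles $\{0,1,\lambda\}$ are more elementary and use only the KMS spectrum as the invariant; the paper's construction buys a slightly stronger cardinality statement (the number of classes dominates the number of homeomorphism types of compact metric spaces).

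Second, for the obstruction to approximate innerness the paper simply cites Theorem~3.2 of Powers--Sakai \cite{PS}, which says directly that an approximately inner flow on a unital $C^*$-algebra (with a trace) has KMS spectrum equal to $\mathbb R$. Your Gibbs-state argument is exactly the Powers--Sakai proof, so you are reproving what the paper quotes. The ``delicate point'' you flag---stability of the $\beta$-KMS condition under weak*-limits when the flows converge uniformly on compacta---is handled in \cite{PS} and \cite{BR}, so there is no gap, but you could shorten your argument by citing \cite{PS} as the paper does. One small omission: in your rescaling analysis you should also note that $c<0$ is impossible since $\{c,c\lambda\}$ would then be negative while $\{1,\lambda'\}$ is positive; this is trivial but completes the case analysis.
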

\begin{proof} Note that weak cocycle-conjugacy, as defined in the preceding paragraph, is the combination of cocycle-conjugacy as defined for example in \cite{Ki2} and scaling, where $\alpha_t$ is replaced by $\alpha_{\lambda t}$. It follows from Proposition 2.1 of \cite{Ki2} and Section 4.1 of \cite{Th2} that cocycle-conjugate flows $\alpha$ and $\mu$ have almost the same KMS bundles. More precisely, for each $\beta \in \mathbb R$ the simplices $S^\alpha_\beta$ and $S^\mu_\beta$ of $\beta$-KMS states for $\alpha$ and $\mu$, respectively, are strongly affinely isomorphic in the sense of \cite{Th2} when $\alpha$ and $\mu$ are cocycle-conjugate. It follows that if $\alpha$ and $\mu$ are weakly cocycle-conjugate, there is a $\lambda \in \mathbb R \backslash \{0\}$ such that $S^\alpha_\beta$ is strongly affinely homeomorphic to $S^\mu_{\lambda \beta}$ for all $\beta \in \mathbb R$. Recall that by Theorem 3.2 of \cite{PS} an approximately inner flow on $A$ has KMS spectrum equal to $\mathbb R$, so to get uncountably many flows that are not mutually weakly cocycle-conjugate and also not approximately inner, there are many ways to go; for example the following. Let $X$ be a compact metric space. By Theorem \ref{21-10-21a} there is a flow $\theta^X$ on $A$ such that $S^{\theta^X}_\beta = \emptyset$ when $\beta \notin \{0,1\}$ while $S^{\theta^X}_1$ is affinely homeomorphic to the simplex $M(X)$ of Borel probability measures on $X$. Since $M(X)$ is not strongly affinely isomorphic to $M(Y)$ when $X$ is not homeomorphic to $Y$, the flow $\theta^X$ is not weakly cocycle-conjugate to $\theta^Y$ when $X$ is not homeomorphic to $Y$. Hence the cardinality of weak cocycle-conjugacy classes of flows on $A$ that are not approximately inner exceeds the cardinality of homeomorphism classes of compact metric spaces.\end{proof}

It is known that many simple monotracial $C^*$-algebras absorb the Jiang-Su algebra tensorially and so are covered by Corollary \ref{21-10-21b}. In particular, this is the case when $A$ in addition is nuclear and has the strict comparison property; cf. \cite{MS1} and \cite{MS2}.

\section{KMS bundles for flows on a unital infinite $C^*$-algebra}

In this section we prove the following statement which is a complement to the main result of \cite{ET}.

\begin{thm}\label{16-08-21cx} Let $A$ be a separable, simple, unital, purely infinite and nuclear $C^*$-algebra in the UCT class. Assume that $K_1(A)$ is torsion free. Let $(S,\pi)$ be a proper simplex bundle such that $0 \notin \pi(S)$. There is a $2 \pi$-periodic flow $\theta$ on $A$ whose bundle of KMS states is isomorphic to $(S,\pi)$.
\end{thm}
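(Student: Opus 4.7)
The plan is to adapt the crossed-product construction of Section 3 to the purely infinite setting. We construct a simple unital nuclear $\mathcal Z$-stable $C^*$-algebra $E$ in the UCT class, together with an automorphism $\gamma$ of $E \otimes \mathbb K$, so that the crossed product $C = (E\otimes \mathbb K)\rtimes_\gamma \mathbb Z$ becomes, after passing to the corner $pCp$ with $p = 1 \otimes e_{11}$, a unital Kirchberg algebra with $K$-theory and unit class matching $A$; the dual $2\pi$-periodic action of $\mathbb T$ restricted to $pCp$ will then supply the desired flow.

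The construction begins by choosing a countable torsion-free abelian group $G$ consisting of a dense subgroup $G_0 \subseteq \mathcal A(S,\pi)$ (playing the role of the $G_0$ of Section \ref{3.1}, though without the vanishing-at-$\overline o$ restriction since $0 \notin \pi(S)$), an auxiliary $\bigoplus_{\mathbb Z}\mathbb Z$-summand providing the trace structure via a map $L$ as in Section \ref{3.1}, and further summands so that for a suitably chosen automorphism $\alpha$ of $G$ acting as multiplication by $e^{-\pi}$ on $G_0$, shift on $\bigoplus_\mathbb Z \mathbb Z$, and tailored homomorphisms on the remaining summands, the Pimsner--Voiculescu computation yields $\coker(1-\alpha) \cong K_0(A)$ (with the distinguished order unit $v$ mapping to $[1_A]$) and $\ker(1-\alpha) \cong K_1(A)$. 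Realize $E$ with $K_0(E) = G$, $K_1(E) = 0$ and trace simplex parametrized by $S$, using the classification results invoked in Proposition \ref{30-09-21bxx} (extended to the proper non-compact $(S,\pi)$); lift $\alpha$ to an automorphism $\gamma \in \Aut(E \otimes \mathbb K)$ with $\gamma_* = \alpha$ via \cite{EGLN} as in Lemma \ref{14-10-21}, and arrange properties (A) and (B) of Lemma \ref{30-09-21dx} using \cite{Sa}, \cite{MS2}, \cite{MS3}.

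Set $C = (E \otimes \mathbb K) \rtimes_\gamma \mathbb Z$. Because $0 \notin \pi(S)$, the scaling factor $e^{-\pi(s)}$ is bounded away from $1$ on compacta in $S$, so every non-zero trace on $E\otimes \mathbb K$ is strictly scaled by $\gamma$; a Kishimoto-type argument (cf.\ \cite{Ki1}) then shows that $C$ is purely infinite simple. By Pimsner--Voiculescu and the K-theoretic design, $K_0(C) \cong K_0(A)$ (sending $[1_{E\otimes e_{11}}] \mapsto [1_A]$) and $K_1(C) \cong K_1(A)$. The corner $pCp$ is thus a unital separable Kirchberg algebra in the UCT class with the Elliott invariant of $A$, so by Kirchberg--Phillips classification $pCp \cong A$. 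Transporting the dual $2\pi$-periodic flow on $C$ restricted to $pCp$ to $A$, the identification of its KMS bundle with $(S,\pi)$ follows the template of Lemma \ref{02-09-21dx}, using the bijection between $\beta$-KMS weights for the dual flow and $e^{-\beta}$-scaling traces on $E\otimes \mathbb K$ (\cite[Lemma 3.1]{Th3}, and the Laca--Neshveyev theory) combined with the analog of Lemma \ref{27-08-21xxx} identifying such scaling states with points of $\pi^{-1}(\beta)$.

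The principal obstacle is the K-theoretic design in the second paragraph: constructing a single automorphism $\alpha$ of a countable torsion-free abelian group $G$ which simultaneously realizes $\coker(1-\alpha) \cong K_0(A)$ with the correct unit class, $\ker(1-\alpha) \cong K_1(A)$, scales traces by $e^{-\pi}$ in the appropriate way, and arises from a realizable Elliott invariant of a simple $\mathcal Z$-stable algebra as in Proposition \ref{30-09-21bxx}. The torsion-free hypothesis on $K_1(A)$ is essential here, because $\ker(1-\alpha)$, being a subgroup of the torsion-free group $G$ (which is naturally torsion-free as it is built from continuous $\mathbb R$-valued functions on $S$ together with shift summands), is automatically torsion-free, so any torsion in $K_1(A)$ would obstruct the construction.
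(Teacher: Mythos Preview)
Your overall architecture is correct --- build a crossed product whose dual action, cut down to a corner, gives a flow on a Kirchberg algebra with the right invariant, and then invoke Kirchberg--Phillips. But the plan as written has real gaps, and the paper's execution differs from yours in ways that matter.

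First, you propose a \emph{unital} $\mathcal Z$-stable $E$ with trace simplex ``parametrized by $S$''. This cannot work when $S$ is non-compact: the tracial state space of a unital $C^*$-algebra is compact. The paper sidesteps this entirely by working with a \emph{stable AF algebra} $B$ rather than a unital $\mathcal Z$-stable $E$. The dimension group $(G,G^+)$ is built directly from functions in $\mathcal A(S,\pi)$ (no $\bigoplus_{\mathbb Z}\mathbb Z$ summand, no map $L$), with $G^+ = \{f : f(x)>0 \text{ for all } x\}\cup\{0\}$, and the fact that $1 \in G^+$ despite $S$ being non-compact is what makes the constant function an order unit and gives a projection $e$ in $B$. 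Using an AF algebra also means the lift of $\alpha_\sharp$ to $\gamma \in \Aut B$ comes from Elliott's AF classification \cite{E1} --- no need for the heavy machinery of \cite{EGLN} that you invoke.

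Second, the ``principal obstacle'' you flag --- designing $\alpha$ so that $\coker(1-\alpha)\cong K_0(A)$ and $\ker(1-\alpha)\cong K_1(A)$ --- is resolved in the paper by a clean trick you are missing. By R\o rdam \cite[Proposition 3.5]{Ro1} there is a countable torsion-free abelian group $H$ with an automorphism $\kappa$ such that $\ker(\id-\kappa)\cong K_1(A)$ and $\coker(\id-\kappa)\cong K_0(A)$. One then sets $G_\sharp = H \oplus G$ with $\alpha_\sharp = \kappa \oplus \alpha$, and defines the positive cone $G_\sharp^+$ to depend only on the $G$-coordinate. Since $\id-\alpha$ is bijective on $G$ (here $0\notin\pi(S)$ is used: $1-e^{-\pi}$ never vanishes), the Pimsner--Voiculescu computation reduces to $\kappa$ on $H$ and delivers exactly the $K$-theory of $A$. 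The $H$-summand is invisible to all positive homomorphisms $G_\sharp\to\mathbb R$, so it does not interfere with the trace/KMS analysis. Your $\bigoplus_{\mathbb Z}\mathbb Z$ summand and shift, by contrast, were tailored in Section 3 to produce $K_0=\mathbb Z$; they serve no purpose here and would only complicate matters.

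Finally, your argument for pure infiniteness is not right: \cite{Ki1} gives simplicity, not pure infiniteness. The paper instead shows $eCe$ is $\mathcal Z$-stable (from property (B) and \cite{TW}) and traceless (since $0\notin\pi(S)$ forces every scaling trace to have $\beta\neq 0$, hence no $\gamma$-invariant traces on $B$ and so no traces on $C$ by property (A)), and then invokes R\o rdam \cite[Corollary 5.1]{Ro2}: a simple unital $\mathcal Z$-stable $C^*$-algebra with no tracial state is purely infinite.
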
 

Since a flow on a unital $C^*$-algebra without trace states cannot have $0$-KMS states and since in any case the KMS states form a proper simplex bundle (see Section \ref{2000} above and \cite{ET}), we have the following corollary.

\begin{cor}\label{16-08-21c} Let $A$ be as in Theorem \ref{16-08-21cx} and let $D$ be a separable unital $C^*$-algebra without trace states and $\theta'$ a flow on $D$.  There is a $2 \pi$-periodic flow $\theta$ on $A$ such that $(S^{\theta'},\pi^{\theta'})$ is isomorphic to $(S^\theta, \pi^\theta)$. 
\end{cor}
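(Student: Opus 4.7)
The plan is to reduce the corollary directly to Theorem \ref{16-08-21cx} by showing that the KMS bundle of any flow on a unital separable $C^*$-algebra without trace states is a proper simplex bundle that misses $0$. First, I would invoke the discussion at the end of Section \ref{2000}: since $D$ is unital and separable, the construction there yields that $(S^{\theta'},\pi^{\theta'})$ is a proper simplex bundle in the canonical way, with $\pi^{\theta'}$ the projection onto the inverse-temperature coordinate.

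Next, I would verify the condition $0 \notin \pi^{\theta'}(S^{\theta'})$. By definition, a $0$-KMS state for $\theta'$ is a $\theta'$-invariant tracial state on $D$, so the fiber $(\pi^{\theta'})^{-1}(0) = S^{\theta'}_0$ consists of $\theta'$-invariant tracial states of $D$. Since $D$ is assumed to have no tracial states, $S^{\theta'}_0 = \emptyset$, hence $0 \notin \pi^{\theta'}(S^{\theta'})$.

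Having established that $(S^{\theta'},\pi^{\theta'})$ is a proper simplex bundle avoiding $0$, I would apply Theorem \ref{16-08-21cx} to this bundle to obtain a $2\pi$-periodic flow $\theta$ on $A$ whose KMS bundle $(S^\theta,\pi^\theta)$ is isomorphic to $(S^{\theta'},\pi^{\theta'})$, which is exactly the conclusion.

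There is essentially no obstacle: the statement is a clean corollary of Theorem \ref{16-08-21cx} combined with the trivial fact that tracelessness of $D$ rules out $0$-KMS states. The only thing to double-check is that the proper simplex bundle structure on $S^{\theta'}$ is indeed obtained in the setting of a general unital separable $C^*$-algebra (not just in the classifiable context), which is already recorded in Section \ref{2000}.
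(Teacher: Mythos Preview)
Your proposal is correct and matches the paper's own argument essentially verbatim: the paper also derives the corollary by noting that the KMS bundle of $\theta'$ is a proper simplex bundle (Section \ref{2000}) and that tracelessness of $D$ forces $0\notin\pi^{\theta'}(S^{\theta'})$, then applying Theorem \ref{16-08-21cx}.
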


\subsection{The proof of Theorem \ref{16-08-21cx}}


Let $(S,\pi)$ be a proper simplex bundle such that $0 \notin \pi(X)$. If $S = \emptyset$ we can take the $\theta$ to be the trivial flow; so we assume that $S \neq \emptyset$. Choose a countable subgroup $\mathcal C$ of $\mathcal A(S,\pi)$ with the following properties.

\begin{itemize}
\item[(1)] The support of each element of $\mathcal C$ is compact.
\item[(2)] For every $N \in \mathbb N$, every $\epsilon > 0$ and every $f \in \mathcal A(S,\pi)$ such that $\supp f \in \pi^{-1}([-N,N])$, there is an element $g \in \mathcal C$ such that $\supp g \subseteq \pi^{-1}([-N,N])$ and
$$
\sup_{x \in S} \left|f(x)-g(x)\right| < \epsilon .
$$
\item[(3)] The function
$$
x \mapsto  e^{n \pi(x)}{(1-e^{-\pi(x)})^m}f(x)
$$
is in $\mathcal C$ for all $f \in \mathcal C$ and all $n,m\in \mathbb Z$.
\end{itemize}

Let $1_+$ and $1_-$ denote the characteristic functions of $[0,\infty)$ and $(-\infty,0]$, respectively. Since $0 \notin \pi(X)$ the functions $1_\pm \circ \pi$ are both continuous on $S$. Let $G_0 \subseteq \mathcal A(S,\pi)$ denote the subgroup generated by $\mathcal C$ and the functions
$$
x \mapsto (1_\pm \circ \pi)  e^{n \pi(x)}{(1-e^{-\pi(x)})^m}, \ n,m \in \mathbb Z.
$$
Set $G = \mathbb QG_0$ and
$$
G^+ = \left\{ f \in G: \ f(x) > 0, \  x \in S\right\} \cup \{0\} .
$$
\begin{lemma}\label{01-09-21} The pair $(G,G^+)$ has the following properties.
\begin{itemize}
\item[(1)] $G^+ \cap (-G^+) = \{0\}$.
\item[(2)] $G = G^+ - G^+$.
\item[(3)] $(G,G^+)$ is unperforated, i.e., $n \in \mathbb N \backslash \{0\}, \ g \in G, \ ng \in G^+ \Rightarrow g \in G^+$. \item[(4)] $(G,G^+)$ has the strong Riesz interpolation property, i.e. if 
$f_i, g_j$, $ i,j \in \{1,2\}$, are elements of $G$ and $f_i < g_j$ in $G$ for all $i,j \in \{1,2\}$, then there is an element $h \in G$ such that
$$
f_i < h < g_j
$$
for all $i,j \in \{1,2\}$. 
\end{itemize}
\end{lemma}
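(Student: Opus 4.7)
The plan is to dispatch properties (1), (2), and (3) quickly from the pointwise strict-positivity definition of $G^+$, reserving the main work for the strong Riesz interpolation property (4).

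For (1), if $f \in G^+ \cap (-G^+)$ were nonzero, both $f(x) > 0$ and $-f(x) > 0$ would hold for all $x \in S$, which is impossible. For (3), if $ng \in G^+$ with $n \geq 1$ and $g \neq 0$, then $ng(x) > 0$ pointwise, so $g(x) > 0$ pointwise, hence $g \in G^+$. For (2), I would write a given $g \in G$ explicitly as a finite $\mathbb{Q}$-linear combination of the generators of $G_0$: a compactly supported part coming from $\mathcal{C}$ plus finitely many monomials of the form $c\,(1_\pm \circ \pi)\,e^{n\pi}(1-e^{-\pi})^m$. The asymptotic behaviour of $g$ on each of $\pi^{-1}((0,\infty))$ and $\pi^{-1}((-\infty,0))$ is then governed by a Laurent polynomial in $e^{\pi}$, and a sufficiently large rational combination of monomials of the same type, together with a large multiple of the constant function $1 = (1_+ \circ \pi) + (1_- \circ \pi) \in G_0$, produces an $H \in G^+$ with $H(x) > |g(x)|$ on $S$. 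Then $g = (H + g) - H$ writes $g$ as a difference of elements of $G^+$.

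The main work is in (4). Following the compact-case argument in Lemma \ref{09-10-21a}, I would first apply Lemma 2.2 of \cite{BEK2} to obtain a continuous interpolating $h_0 \in \mathcal{A}(S,\pi)$ with $f_i < h_0 < g_j$ on $S$, and then approximate $h_0$ by an element of $G$. The new obstacle in the non-compact setting is that the four gaps $g_j - h_0$ and $h_0 - f_i$ need not have a positive uniform lower bound --- their infima over $S$ may equal $0$, attained only in the limit $|\pi(x)| \to \infty$, since $f_i,g_j \in G$ may share asymptotic behaviour. The compactness argument converting strict inequality into a uniform positive gap therefore breaks down.

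To circumvent this, I would exploit that the asymptotic behaviour of $f_i, g_j$ is given by finitely many of the generators $(1_\pm \circ \pi)\,e^{n\pi}(1-e^{-\pi})^m$. Pick $\tilde{h} \in G$ satisfying $f_i < \tilde{h} < g_j$ on all of $S$ (for instance, a suitable rational convex combination of the $f_i$ and $g_j$, which lies in $G$ since $G = \mathbb{Q} G_0$), and then construct $h_0 \in \mathcal{A}(S,\pi)$ in such a way that $h_0 = \tilde{h}$ outside some sufficiently large compact region $\pi^{-1}([-N,N])$, while $f_i < h_0 < g_j$ continues to hold pointwise on all of $S$. On the compact region $\pi^{-1}([-N,N])$ the four gaps then have a uniform positive lower bound $\delta > 0$, and the difference $h_0 - \tilde{h}$ is a compactly supported element of $\mathcal{A}(S,\pi)$ with support in $\pi^{-1}([-N,N])$. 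Property (2) of $\mathcal{C}$ then supplies $g_c \in \mathcal{C}$ with $\supp g_c \subseteq \pi^{-1}([-N,N])$ and $\|(h_0 - \tilde{h}) - g_c\|_\infty < \delta$, and setting $h = \tilde{h} + g_c \in G$ yields $\|h - h_0\|_\infty < \delta$ on $S$ (since $h - h_0 = g_c - (h_0 - \tilde{h})$ vanishes outside $\pi^{-1}([-N,N])$), preserving all four strict inequalities $f_i < h < g_j$. The hard part will be this construction of an $h_0 \in \mathcal{A}(S,\pi)$ that strictly interpolates $f_i < h_0 < g_j$ on all of $S$ while simultaneously agreeing with the prescribed $\tilde{h} \in G$ outside a specified compact region; this requires an adaptation of the BEK2 interpolation lemma that controls the behaviour on an unbounded region rather than merely fixing a value at a single point.
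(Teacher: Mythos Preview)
Your handling of (1)--(3) is fine and matches the paper's ``easy to establish''. The gap is in (4), at the sentence ``Pick $\tilde h\in G$ satisfying $f_i<\tilde h<g_j$ on all of $S$''. As written this is exactly the conclusion you are trying to prove; if such a $\tilde h$ existed you would take $h=\tilde h$ and be done, and the subsequent construction of $h_0$ and $g_c$ would be superfluous. Your parenthetical fix---a rational convex combination of the $f_i,g_j$---does not rescue this: on a four-point space with values $(f_1,f_2,g_1,g_2)$ equal to $(0,0,1,10)$, $(0,0,10,1)$, $(0,9,10,10)$, $(9,0,10,10)$ one checks that $f_i<g_j$ everywhere, yet the constraints at the first two points force $\mu_1+\mu_2<2/11$ while the last two force $\mu_1+\mu_2>9/11$, so no convex combination interpolates. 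Such configurations occur on a single simplex fiber of $S$, so the suggestion fails in general.

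Your overall architecture for (4) is in fact the paper's: produce an element of $G$ that interpolates \emph{outside} some $\pi^{-1}([-K,K])$, extend it across the compact region to an $H\in\mathcal A(S,\pi)$ that interpolates everywhere, then approximate the compactly supported correction via property~(2) of $\mathcal C$. The step you have not supplied is the first one, and it is where the work lies. The paper does it by clearing denominators: choose $N$ so that $(e^{\pi}(e^{\pi}-1))^N f_i$ and $(e^{\pi}(e^{\pi}-1))^N g_j$ are genuine polynomials $p_i(e^\pi)$, $q_j(e^\pi)$ with rational coefficients on each half-line, then invoke a one-variable rational-polynomial interpolation lemma (Lemma~4.7 of \cite{ET}) to get a rational polynomial strictly between the $p_i$ and $q_j$ for large argument. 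Dividing back yields $h_+,h_-\in G$ interpolating near $+\infty$ and $-\infty$; the gluing to a global $H\in\mathcal A(S,\pi)$ is Lemma~4.4(2) of \cite{ET}, which is precisely the BEK2 adaptation you anticipated. So the missing ingredient is this polynomial interpolation at infinity, not a convex combination.
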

\begin{proof} The first three items are easy to establish. To prove (4) let $f_1,f_2,g_1,g_2 \in G$ such that $f_i < g_j$ in $G$ for all $i,j \in \{1,2\}$. By definition of $G$ there is $N \in \mathbb N$ so large that there are polynomials $p_1,p_2,q_1,q_2$ with rational coefficients such that
$$
\left(e^{\pi(x)}(e^{\pi(x)} -1)\right)^N f_i(x)  =  p_i(e^{\pi(x)})
$$
and
$$
\left(e^{\pi(x)}(e^{\pi(x)} -1)\right)^N g_j(x)  =  q_j(e^{\pi(x)})
$$
for all $i,j$ and all large $x$. It follows then from Lemma 4.7 of \cite{ET} that there is a polynomial $p_+$ with rational coefficients such that the function
$$
h_+(x) = 1_+ \circ \pi(x) \left(e^{\pi(x)}(e^{\pi(x)} -1)\right)^{-N} p_+(e^{\pi(x)}) 
$$
is an element of $G$ with the property that there is $K_+ > 0$ such that $h_+(x) = 0$ when $\pi(x) \leq 0$ and
$$
f_i(x) < h_+(x) < g_j(x) 
$$
for all $i,j$ and all $x \in S$ with $\pi(x) \geq K_+$. In the same way we get also an element $h_- \in G$ and a $K_- > 0$ such that $h_-(x) =0$ when $\pi(x) \geq 0$ and
$$
f_i(x) < h_-(x) < g_j(x) 
$$
for all $i,j$ and all $x \in S$ with $\pi(x) \leq - K_-$. Set $K = \max \{K_+,K_-\}$. It follows from (2) in Lemma 4.4 of \cite{ET} that there is $H\in \mathcal A(S,\pi)$ such that $H(x) = h^-(x)$ when $\pi(x) \leq -K$, $ H(x) = h^+(x)$ when $\pi(x) \geq K$ and
$
f_i(x) < H(x) < g_j(x)$
for all $x \in S$ and all $i,j$. Let $\delta > 0$ be smaller than $H(x) -f_i(x)$ and $g_j(x) - H(x)$ for all $i,j$ and all $x \in \pi^{-1}([-K,K])$. Since $H-h_+-h_-$ is supported in $[-K,K]$ it follows from the definition of $\mathcal C$ that there is $g \in \mathcal C$ such that $\supp g \subseteq \pi^{-1}([-K,K])$ and $\left|g(x) - (H(x) - h_+(x) - h_-(x))\right| < \delta$ for all $s \in S$. Then 
$$
h = g + h_+ + h_- 
$$
is an element of $G$ with the desired property.

\end{proof}

In short, Lemma \ref{01-09-21} says that $(G,G^+)$ is a dimension group with the strong Riesz interpolation property. We define an automorphism $\alpha$ of $(G,G^+)$ such that
$$
\alpha( g) = e^{-\pi}g.
$$

Let $A$ be the algebra from Theorem \ref{16-08-21cx}.  By Proposition 3.5 of \cite{Ro1} there is a countable abelian torsion free group $H$ and an automorphism $\kappa$ of $H$ such that $K_1(A)\simeq \ker (\id- \kappa)$ and $\coker (\id - \kappa) \simeq K_0(A)$. We note that we may assume that $H \neq \{0\}$; if not we exchange $H$ with $\mathbb Q$ and set $\kappa(x) = 2 x$. Set
$$
G_\sharp = H \oplus {G} \ .
$$
Let $p : G_\sharp \to G$ be the canonical projection and set
$$
G_\sharp^+ = \left\{ x \in G_\sharp : \ p(x) \in G^+\backslash \{0\}\right\} \cup \{0\}.
$$
It follows from Lemma 3.2 of \cite{EHS} and Lemma \ref{01-09-21} above that $(G_\sharp,G^+_\sharp)$ is dimension group.

Define $\alpha_\sharp \in \Aut G_\sharp$ by
$$
\alpha_\sharp = \kappa \oplus \alpha .
$$
It follows from \cite{EHS} and \cite{E1} that there is a stable AF algebra $B$ such that $(K_0(B),K_0(B)^+) = (G_\sharp, G^+_\sharp)$ and an automorphism $\gamma \in \Aut B$ such that $\gamma_* = \alpha_\sharp$.

\begin{lemma}\label{01-09-21d} $(G_\sharp,G_\sharp^+)$ has large denominators; that is, for all $g \in G_\sharp^+$ and $m \in \mathbb N$ there is an element $h \in G_\sharp^+$ and an $n \in \mathbb N$ such that $m h \leq g \leq n h$ in $G_\sharp$. 
\end{lemma}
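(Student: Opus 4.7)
The key observation is that the ordered group $(G_\sharp, G_\sharp^+)$ has a positive cone which depends only on the $G$-coordinate: an element $x\in G_\sharp$ lies in $G_\sharp^+$ precisely when either $x=0$ or $p(x)(s) > 0$ for every $s \in S$. In particular, the $H$-coordinate plays no role in the order, and since $G = \mathbb{Q}G_0$ is a $\mathbb{Q}$-vector space (hence divisible), we can freely rescale the $G$-coordinate by arbitrary positive rationals without leaving $G^+$.

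The plan, then, is as follows. Given a non-zero $g = (\eta, f) \in G_\sharp^+$, by definition $f \in G^+ \setminus \{0\}$, meaning $f(s) > 0$ for every $s \in S$. Given $m \in \mathbb{N}$, set
\[
h \;=\; \left(0, \tfrac{1}{m+1} f\right) \in G_\sharp.
\]
Since $\tfrac{1}{m+1}f \in G$ (by divisibility of $G$) and is strictly positive on $S$, we have $h \in G_\sharp^+$. Then
\[
g - m h \;=\; \left(\eta,\; f - \tfrac{m}{m+1}f\right) \;=\; \left(\eta,\; \tfrac{1}{m+1}f\right),
\]
whose $G$-coordinate is strictly positive on $S$, so $g - mh \in G_\sharp^+$, i.e.\ $mh \leq g$. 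Setting $n = m+2$,
\[
n h - g \;=\; \left(-\eta,\; \tfrac{m+2}{m+1}f - f\right) \;=\; \left(-\eta,\; \tfrac{1}{m+1} f\right) \in G_\sharp^+
\]
by the same reason, so $g \leq nh$. The case $g = 0$ is trivial: take $h = 0$ and $n = 1$.

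I do not foresee any real obstacle here; the entire argument hinges on the two simple facts that (i) $G^+_\sharp$ ignores the $H$-coordinate, and (ii) $G$ is divisible, so we may rescale $f$ inside $G^+$. This is in marked contrast to the analogous step in the earlier compact-bundle construction, where $G_0$ was only a countable subgroup of $\mathcal{A}(S,\pi)$ and the $\bigoplus_{\mathbb{Z}}\mathbb{Z}$-summand was not divisible; here, having passed to rational coefficients, the large denominator property is immediate.
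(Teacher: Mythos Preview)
Your proof is correct and follows essentially the same idea as the paper's one-line argument, which simply notes that it suffices to show $(G,G^+)$ has large denominators and that this is automatic because $\mathbb{Q}G = G$. Your version is the explicit computation behind that sentence: you exploit the divisibility of $G$ and the fact that the positive cone of $G_\sharp$ depends only on the $G$-coordinate, choosing $h=(0,\tfrac{1}{m+1}f)$ to keep the differences $g-mh$ and $nh-g$ strictly positive in the second coordinate regardless of the $H$-component.
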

\begin{proof} It suffices to show that $(G,G^+)$ has large denominators and this is automatic because $\mathbb Q G = G$.
\end{proof}

It follows now from Lemma 3.4 of \cite{Th3} that we may arrange for $\gamma$ to have the following additional properties.
\begin{itemize}
\item[(A)] The restriction map 
$\mu \ \mapsto \ \mu|_{B}$
is a bijection from traces $\mu $ on $B \rtimes_{\gamma} \mathbb Z$ onto the $\gamma$-invariant traces on $B$, and
\item[(B)] $B \rtimes_{\gamma} \mathbb Z$ is $\mathcal Z$-stable; that is $(B\rtimes_{\gamma} \mathbb Z)\otimes \mathcal Z \simeq B \rtimes_{\gamma} \mathbb Z$ where $\mathcal Z$ denotes the Jiang-Su algebra, \cite{JS}.
\end{itemize}

\begin{lemma}\label{04-08-21x} The only order ideals $I$ in $G_\sharp$ such that $\alpha_\sharp(I) = I$ are $I = \{0\}$ and $I = G_\sharp$.
\end{lemma}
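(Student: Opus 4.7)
The plan is to reduce the statement to the analogous claim on $G$ alone — namely that every nonzero $\alpha$-invariant order ideal of $G$ is all of $G$ — and then establish the latter by using that $\alpha$ is multiplication by $e^{-\pi}$. Suppose $I \neq \{0\}$ is $\alpha_\sharp$-invariant, and set $J := p(I) \subseteq G$, where $p : G_\sharp \to G$ is the projection. Because $G_\sharp^+$ is characterised by positivity of the $G$-component alone, $p(I \cap G_\sharp^+) \subseteq G^+$, and so $J$ is generated by its positive part. If $0 \leq g' \leq g$ in $G$ with $g \in J \cap G^+$, pick $(h, g) \in I$; then $(h, g) - (0, g') = (h, g - g')$ lies in $G_\sharp^+$ (since $g - g' \in G^+ \cup \{0\}$), so $0 \leq (0, g') \leq (h, g) \in I$ and the hereditary property of $I$ forces $(0, g') \in I$ — hence $g' \in J$. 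Thus $J$ is a nonzero $\alpha$-invariant order ideal of $G$.

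\textbf{The main step: $J = G$.} Pick $f \in J \cap G^+$, so $f > 0$ everywhere on $S$. For each $K \in \mathbb N$ set
$$
f_K := \sum_{k=-K}^{K} \alpha^k(f) = f(x)\sum_{k=-K}^K e^{-k\pi(x)} \; \in \; J \cap G^+.
$$
For arbitrary $g \in G^+$, I would find $N, K$ with $Nf_K - g \in G^+$; then $0 \leq g \leq Nf_K \in J$ forces $g \in J$. The point is that, by construction of $G$, outside a suitable $\pi^{-1}([-M_0, M_0])$ every element of $G$ reduces on each side of $\pi = 0$ to a $\mathbb Q$-Laurent polynomial in $e^{\pi}$: the $\mathcal C$-contributions are compactly supported and vanish there, while the generators $(1_\pm \circ \pi)e^{n\pi}(1 - e^{-\pi})^m$ are rational in $e^\pi$ with poles only at $e^\pi \in \{0, 1\}$, hence regular on the half-lines in question. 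Consequently, at each unbounded end of $\pi(S)$ both $f$ and $g$ admit asymptotic leading behavior $\sim c\, e^{d\pi}$ with $d \in \mathbb Z$, and positivity of $f$ forces its leading coefficients to be positive. The dominant summand in $f_K$ as $\pi \to +\infty$ is $k = -K$, giving $f_K \sim c^{+,f} e^{(d^{+,f} + K)\pi}$; symmetrically at $-\infty$. Choosing $K$ larger than the exponent difference between $g$ and $f$ at both ends makes $f_K$ outgrow $g$ on the tails; and on $\pi^{-1}([-M, M])$ (compact by properness of $\pi$) positivity of $f_K$ and continuity give a uniform ratio bound. Enlarging $N$ yields $Nf_K - g > 0$ pointwise on $S$, hence $g \leq Nf_K$ in $G$ and $g \in J$. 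Therefore $G^+ \subseteq J$, and $J = G$.

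\textbf{From $J = G$ back to $I = G_\sharp$, and the main obstacle.} For each $g \in G^+$ some $(h, g) \in I$; since $2(h, g) - (0, g) = (2h, g) \in G_\sharp^+$, we have $(0, g) \leq 2(h, g) \in I$, so $(0, g) \in I$ by hereditary. Hence $(0, g) \in I$ for every $g \in G^+$ and, by taking differences, for every $g \in G$. Finally, for any $h \in H$ and $g \in G^+$, $(0, 2g) - (h, g) = (-h, g) \in G_\sharp^+$, so $(h, g) \leq (0, 2g) \in I$, whence $(h, g) \in I$ by hereditary. Since $G_\sharp^+$ generates $G_\sharp$, this yields $I = G_\sharp$. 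The delicate step is clearly the core one: extracting precise leading exponential behavior of elements of $G$ at the unbounded ends of $\pi(S)$, and verifying that the geometric amplification $f_K = \sum_{|k| \leq K} \alpha^k(f)$ eventually dominates any prescribed element of $G^+$. Properness of $\pi$ is essential for separating the argument into a compact central piece and two exponential tails.
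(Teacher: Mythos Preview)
Your proof is correct and follows essentially the same strategy as the paper: project $I$ to an $\alpha$-invariant order ideal $J\subseteq G$, show $J=G$ by using $\alpha$-shifts of a positive element to dominate an arbitrary $g\in G^+$, then lift back to $G_\sharp$. The only tactical difference is in the middle step: rather than analysing the leading exponential asymptotics of $f_K=\sum_{|k|\le K}\alpha^k(f)$ directly, the paper first observes that any $f\in J\cap G^+$ dominates a concrete auxiliary $g'=1_-\!\circ\pi\cdot e^{n\pi}+1_+\!\circ\pi\cdot e^{-m\pi}$ (so $g'\in J$ by hereditary), and then uses just two shifts $\alpha^{l_1}(g')+\alpha^{l_2}(g')$ of this explicit function to bound any $g\in G^+$---this sidesteps the general asymptotic analysis you flag as the ``delicate step,'' but the underlying idea is identical.
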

\begin{proof} Recall that an order ideal $I$ in $G_\sharp$ is a subgroup such that
 \begin{itemize}
\item[(a)] $I= I \cap G_\sharp^+ -  I \cap G_\sharp^+$, and
\item[(b)] when $ 0 \leq y \leq x$ in $G_\sharp$ and $x \in I$, then $y \in I$.
\end{itemize}
Let $I$ be a non-zero order ideal such that $\alpha_\sharp(I) = I$. Then $p(I)$ is an order ideal in $G$ such that $\alpha(p(I)) = p(I)$. Since $p(I) \cap G^+ \neq \{0\}$ there is an element $g \in p(I) \cap G^+ \backslash \{0\}$. By definition of $G$ there are natural numbers $n,m,K \in \mathbb N$ such that the function
$$
g'(x) = 1_-\circ \pi e^{n \pi} \ + \ 1_+ \circ \pi e^{-m \pi}
$$
has the property that
$$
0 < g'(x) < Kg(x), \ \ x \in S .
$$  
It follows that $g' \in p(I)$. Since $p(I)$ is $\alpha$-invariant it follows that 
$$
\alpha^l(g') = 1_-\circ \pi e^{(n-l) \pi} \ + \ 1_+ \circ \pi e^{-(m+l) \pi} \in p(I)
$$
for all $l \in \mathbb Z$. For an arbitrary element $g \in G^+$ we can find $l_1,l_2 \in \mathbb Z$ and $M \in \mathbb N$ such that 
$$
g(x) < M(\alpha^{l_1}(g')(x) \ + \ \alpha^{l_2}(g')(x)) 
$$
for all $x \in S$. This implies that $G^+ \subseteq p(I)$ and hence that $G = p(I)$. To conclude that $I = G_\sharp$ it only remains to show that $H \oplus 0 \subseteq G_\sharp$. For this note that there is an element $h'\in H$ such that $(h',1) \in I$. Note that, for any $h \in H$, $(h,0) = (h+h',1) - (h',1)$ and $0 < (h+h',1) < 2(h',1)$ in $G_\sharp$. It follows that $(h+h',1) \in I$ and hence that $(h,0) \in I$.
\end{proof}

It follows from Lemma \ref{04-08-21x} and \cite{E1} that there are no non-trivial $\gamma$-invariant ideals in $B$. Since $\gamma_*^k = \alpha^k$ is non-trivial for all $k \neq 0$, no non-trivial power of $\gamma$ is inner and it follows therefore from \cite{E2} that 
$$
C = B \rtimes_\gamma \mathbb Z
$$
is simple. (See also \cite{Ki1}.)

Let $q : H \to H/(\id - \kappa)(H) = K_0(A)$ be the quotient map and choose $w \in H$ such that $q(w) = [1]$. Let $v = (w,1) \in G_\sharp^+$.

\begin{lemma}\label{07-10-21g} Let $\phi : G_\sharp \to \mathbb R$ be a positive homomorphism and $s > 0$ a positive numbers such that $\phi(v) = 1$ and $\phi \circ \alpha_\sharp = s\phi$. Let $\beta = - \log s$. There is an element $\omega \in \pi^{-1}(\beta)$ such that $\phi(h,g) = g(\omega)$ for all $(h,g) \in G_\sharp$.  
\end{lemma}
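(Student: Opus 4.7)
The plan is to model the proof on that of Lemma~\ref{27-08-21xxx}, adapting it to the non-compact $S$ of this section. My first move would be to show $\phi$ vanishes on the $H$-summand and so reduces to a positive homomorphism $\tilde\phi : G \to \mathbb R$ with $\tilde\phi(1) = 1$ and $\tilde\phi\circ\alpha = s\tilde\phi$. Indeed, for any $h \in H$ and integer $n$ the element $(nh,1)$ lies in $G_\sharp^+$, since its $G$-component is $1 \in G^+$, so $n\phi(h,0) + \phi(0,1) \geq 0$ for every $n \in \mathbb Z$, forcing $\phi(h,0) = 0$; the functional $\tilde\phi(g) := \phi(0,g)$ is then as claimed. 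A perturbation-by-$\tfrac{1}{n}$ argument (using $1 \in G$) upgrades positivity of $\tilde\phi$ from strictly positive elements of $G$ to all non-negative elements, and a rational order-unit argument (bounding $\pm f$ below and above by $M\cdot 1$ for $M \in \mathbb Q$ just above $\|f\|_\infty$) gives $|\tilde\phi(f)| \leq \|f\|_\infty$ for every bounded $f \in G$.

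Next I would produce a Borel measure $m$ representing $\tilde\phi$. The Hahn-Banach theorem, applied with sublinear $\|\cdot\|_\infty$, extends $\tilde\phi$ (after passing to the sup-norm closure in $C_b(S)$ of $G \cap C_b(S)$, on which it extends uniquely to an $\mathbb R$-linear functional) to a linear $\Phi : C_b(S) \to \mathbb R$ of norm at most one; since $\Phi(1) = 1$, the standard argument applied to $g = 2f - 1$ for $0 \leq f \leq 1$ makes $\Phi$ positive, hence a state. Riesz representation applied to $\Phi|_{C_0(S)}$ supplies a positive finite Borel measure $m$ on $S$ with $\Phi(f) = \int f\,dm$ for $f \in C_0(S)$. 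To pin down its support, I would push the scaling identity $\tilde\phi(e^{-\pi}f) = s\tilde\phi(f)$ through the density of $\mathcal C$ in $C_c(S)\cap\mathcal A(S,\pi)$ (property~(2)) together with closure of $\mathcal C$ under $e^{-\pi}\cdot$ (property~(3)) to obtain $\int e^{-\pi}f\,dm = s\int f\,dm$ for every $f \in C_c(S)\cap\mathcal A(S,\pi)$. Specialising to $f = h\circ\pi$ for $h \in C_c(\mathbb R)$ gives $(e^{-t}-s)\,d(m\circ\pi^{-1})(t) = 0$, so $m\circ\pi^{-1}$ is concentrated at $\beta = -\log s$; hence $m$ lives on the compact set $\pi^{-1}(\beta)$.

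The step I expect to be the main technical obstacle is extending $\tilde\phi(f) = \int f\,dm$ from $G \cap C_0(S)$ to all of $G$. For this I would exploit the explicit structure of the generators: every $f \in G$ splits in $G$ as $c + (1_+\circ\pi)P(e^\pi) + (1_-\circ\pi)Q(e^\pi)$ with $c \in \mathbb Q\mathcal C$ and $P, Q$ Laurent polynomials with rational coefficients, each summand being separately an element of $G$. For $k \in \mathbb N$ sufficiently large, $e^{-k\pi}(1_+\circ\pi)P(e^\pi) \in G \cap C_0(S)$, and combining the scaling with $e^{-k\pi} = s^k$ on $\pi^{-1}(\beta)$ gives
\[
\tilde\phi\bigl((1_+\circ\pi)P(e^\pi)\bigr) = s^{-k}\tilde\phi\bigl(e^{-k\pi}(1_+\circ\pi)P(e^\pi)\bigr) = s^{-k}\!\int e^{-k\pi}(1_+\circ\pi)P(e^\pi)\,dm = \int (1_+\circ\pi)P(e^\pi)\,dm;
\]
a symmetric argument with $k$ large and negative handles the $(1_-\circ\pi)Q(e^\pi)$ summand, while the $\mathcal C$-part is already in $C_0(S)$. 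Summing, $\tilde\phi(f) = \int f\,dm$ for every $f \in G$. Taking $f = 1$ forces $m$ to be a Borel probability measure on the compact Choquet simplex $\pi^{-1}(\beta)$, and its barycenter is a point $\omega \in \pi^{-1}(\beta)$ satisfying $\int a\,dm = a(\omega)$ for every $a \in \Aff\pi^{-1}(\beta)$. Since $f|_{\pi^{-1}(\beta)} \in \Aff\pi^{-1}(\beta)$ for all $f \in G$, this yields $\phi(h,g) = \tilde\phi(g) = g(\omega)$, as required.
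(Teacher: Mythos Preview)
Your argument is correct and follows the same overall strategy as the paper's proof: kill the $H$-summand, show the induced $\psi:G\to\mathbb R$ is norm-contractive on bounded elements, extend via Hahn--Banach, represent by a measure $m$, use the scaling $\psi\circ\alpha=s\psi$ to concentrate $m$ on $\pi^{-1}(\beta)$, and finish by evaluating at a single point of that fiber. The paper differs only in packaging: it extends $\psi$ to the space $\mathcal A_{\mathbb R}(S,\pi)$ of elements of $\mathcal A(S,\pi)$ having a limit at infinity (rather than to all of $C_b(S)$), and it outsources the final concentration-and-evaluation step to Lemma~4.12 of \cite{ET} instead of doing your explicit scaling-plus-barycenter computation. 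Your version is more self-contained; in particular your step~6 idea of using $\tilde\phi(g)=s^{-k}\tilde\phi(e^{-k\pi}g)$ to reduce each unbounded generator to a $C_0$-integral is a clean way to avoid appealing to the external reference.

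One small inaccuracy to repair: your claimed decomposition $f=c+(1_+\circ\pi)P(e^\pi)+(1_-\circ\pi)Q(e^\pi)$ with $P,Q$ Laurent polynomials is not quite right, because $G_0$ also contains the generators $(1_\pm\circ\pi)e^{n\pi}(1-e^{-\pi})^m$ with $m<0$, and $(1-e^{-\pi})^{-1}$ is a genuine rational function of $e^\pi$, not a Laurent polynomial. This does not damage your argument: simply write $f=c+g_++g_-$ with $g_\pm$ a rational combination of the $(1_\pm)$-generators. Since $(1-e^{-\pi})^m\to 1$ as $\pi\to+\infty$, for $k$ larger than every exponent $n$ appearing in $g_+$ one still has $e^{-k\pi}g_+=\alpha^k(g_+)\in G\cap C_0(S)$, and your displayed computation then goes through verbatim; the $(1_-)$-piece is handled symmetrically.
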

\begin{proof} For $h \in H$, we find that $\pm n(h,0) + v \geq 0$ in $G_\sharp$ and hence $\pm n\phi(h,0) + 1 \geq 0$ for all $n \in \mathbb N$, implying that $\phi(h,0) = 0$. It follows that $\phi$ factorises through $p$, i.e., there is a positive homomorphism $\psi : G \to \mathbb R$ such that $\psi \circ p = \phi$. If $f \in G$, $n,m \in \mathbb N$ and $|f(x)| < \frac{n}{m}$ for all $x \in S$, it follows that $-n < mf < n$ in $G$. Since $\psi(1) = 1$ this implies that $\left|\psi(f)\right| \leq \frac{n}{m}$. It follows that $\psi$ is a $\mathbb Q$-linear contraction. Let $\mathcal A_\mathbb R(S,\pi)$ be the subspace of $\mathcal A(S,\pi)$ consisting of the elements of $\mathcal A(S,\pi)$ that have a limit at infinity and denote by $\mathcal A_0(S,\pi)$ the subset of $\mathcal A_\mathbb R(S,\pi)$ consisting of the elements of $\mathcal A_\mathbb R(S,\pi)$ that vanish at infinity. Every element of $\mathcal A_\mathbb R(S,\pi)$ can approximated in norm by elements of the form
$$
q1_-\circ \pi + q1_+\circ \pi + f
$$
where $q \in \mathbb Q$ and $f \in \mathcal A_0(S,\pi)$. It follows therefore from the second condition on $\mathcal C$ that every element of $\mathcal A_\mathbb R(S,\pi)$ can be approximated in norm by elements of $G$, implying that $\psi$ extends by continuity to a linear contraction $\psi : \mathcal A_\mathbb R(S,\pi) \to \mathbb R$. By the Hahn-Banach theorem there is a contractive extension of $\psi$ to all continuous real-valued functions on $S$ with a limit at infinity. Since $\psi(1) = 1$ this extension is positive and it follows therefore that there is a bounded Borel measure $m$ on $S$ such that
$$
\psi(f) = \int_S f(x) \ \mathrm{d}m(x) 
$$
for all $f \in \mathcal A_0(S,\pi)$. From here the argument from the last part of the proof of Lemma 4.12 of \cite{ET} can be repeated to obtain an element $\omega \in \pi^{-1}(\beta)$ such that $\psi(f) = f(\omega)$ for all $f \in G$, implying that $\phi(h,g) = g(\omega)$ for all $(h,g) \in G_\sharp$.
\end{proof}

Let $e\in B$ be a projection such that $[e] = v$ in $K_0(B) = G_\sharp$.

\begin{lemma}\label{07-10-21h} $eCe$ is $*$-isomorphic to $A$.
\end{lemma}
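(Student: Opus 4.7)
The plan is to verify the hypotheses of the Kirchberg--Phillips classification theorem for $eCe$ and $A$ and to match their Elliott invariants. Concretely, I must show that $eCe$ is separable, simple, unital, nuclear, $\mathcal Z$-stable, purely infinite, and satisfies the UCT, and that $(K_0(eCe),[e],K_1(eCe)) \cong (K_0(A),[1_A],K_1(A))$. Since $B$ is AF and $C = B \rtimes_\gamma \mathbb Z$ is simple, nuclear, and $\mathcal Z$-stable (by the properties arranged for $\gamma$), and since the UCT class is closed under $\mathbb Z$-crossed products, $eCe$ is unital, simple, separable, nuclear, UCT, and $\mathcal Z$-stable (the last by Corollary 3.1 of \cite{TW}).

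The main obstacle is pure infiniteness, which I would establish through the Kirchberg--R\o{}rdam dichotomy for simple separable nuclear $\mathcal Z$-stable $C^*$-algebras: it suffices to show that $eCe$ admits no tracial state. Suppose $\tau_0$ were one. As $e$ is full in the simple algebra $C$, $\tau_0$ extends uniquely to a densely defined lower semicontinuous trace $\tau$ on $C$. By the trace-correspondence property (A) of $\gamma$, the restriction $\tau|_B$ is a nonzero $\gamma$-invariant trace on $B$ satisfying $\tau|_B(e) = \tau_0(e) = 1$. Because $B$ is AF with $K_0(B) = G_\sharp$, this induces a positive homomorphism $\phi : G_\sharp \to \mathbb R$ with $\phi(v) = 1$ and $\phi \circ \alpha_\sharp = \phi$. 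Applying Lemma \ref{07-10-21g} with $s = 1$ would then produce a point $\omega \in \pi^{-1}(0)$, contradicting the hypothesis $0 \notin \pi(S)$.

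For the K-theory, I would apply the Pimsner--Voiculescu exact sequence to $C = B \rtimes_\gamma \mathbb Z$. Since $K_1(B) = 0$, one obtains
\[
K_0(C) \cong \coker(\id - \alpha_\sharp), \qquad K_1(C) \cong \ker(\id - \alpha_\sharp).
\]
Using $\alpha_\sharp = \kappa \oplus \alpha$ on $G_\sharp = H \oplus G$, both sides split as direct sums. On the $G$-factor, $\ker(\id - \alpha)$ consists of those $g \in G$ with $(1 - e^{-\pi})g = 0$; since $0 \notin \pi(S)$ and $\pi(S)$ is closed (because $\pi$ is proper), $1 - e^{-\pi}$ is bounded away from $0$ on $S$, forcing $g = 0$. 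Surjectivity of $\id - \alpha$ on $G$ reduces, generator by generator, to dividing by $1 - e^{-\pi}$: for the explicit generators $(1_\pm \circ \pi) e^{n\pi}(1-e^{-\pi})^m$ this merely decreases $m$ by $1$, and for $f \in \mathcal C$ this is permitted by condition (3) on $\mathcal C$. Hence $\coker(\id - \alpha) = 0$, so $K_0(C) \cong \coker(\id - \kappa) \cong K_0(A)$ and $K_1(C) \cong \ker(\id - \kappa) \cong K_1(A)$, and $[e] = (w,1)$ maps to $(q(w),0) = [1_A]$. Since $K_*(eCe) = K_*(C)$ with $[e]$ corresponding to $[1_A]$, the Kirchberg--Phillips theorem yields the desired $*$-isomorphism $eCe \cong A$.
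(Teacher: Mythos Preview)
Your proof is correct and follows essentially the same approach as the paper: Pimsner--Voiculescu together with the bijectivity of $\id-\alpha$ on $G$ yields the $K$-theory computation, property (A) together with Lemma \ref{07-10-21g} (applied with $s=1$) rules out traces so that R\o rdam's dichotomy gives pure infiniteness, and Kirchberg--Phillips finishes. The paper argues identically, only citing the bijectivity of $\id-\alpha$ on $G$ as ``follows from the definition of $G$'' where you spell out the generator-by-generator verification, and invoking Lemma 3.5 of \cite{Th3} for the passage from traces on the AF algebra $B$ to positive homomorphisms on $K_0(B)$.
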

\begin{proof} It follows from the Pimsner-Voiculescu exact sequence, \cite{PV}, that $K_0(C)$ can be identified with the cokernel of $\id - \alpha_\sharp$ in such a way that the map $\iota_* : K_0(B) \to K_0(C)$ induced by the inclusion $\iota : B \to C$ becomes the quotient map $q : G_\sharp \to G_\sharp/(\id - \alpha_\sharp)(G_\sharp)$. It follows from the definition of $G$ that $\id - \alpha$ is an automorphism of $G$ and hence
$$
 G_\sharp/(\id - \alpha_\sharp)(G_\sharp) = H/(\id - \kappa)(H) = K_0(A) .
 $$
The resulting isomorphism $K_0(eCe) \to K_0(A)$ takes $[e]$ to $[1]$. Similarly, the Pimsner-Voiculescu exact sequence shows that 
$$
K_1(C) = \ker(\id - \alpha_\sharp)= \ker (\id - \kappa) = K_1(A) .
$$ 
We conclude that $K_1(eCe) \simeq K_1(A)$. The proof is then completed by using the Kirchberg-Phillips result; see (iv) of Theorem 8.4.1 of \cite{Ro3}. For this we need to verify that both algebras are separable, simple, unital, purely infinite, nuclear and in the UCT class. This is assumed for $A$ and regarding $eCe$ most of these properties are well-known. (For the UCT see 23.1.1 and 22.3.5 (g) of \cite{Bl}.) To see that $eCe$ is purely infinite we note that $eCe$ is $\mathcal Z$-stable thanks to \cite{TW} and (B) above. Furthermore, it follows from (A) above, in combination with Lemma 3.5 of \cite{Th3} and Lemma \ref{07-10-21g} above, that there are no traces on $eCe$. By Corollary 5.1 of \cite{Ro2} this implies that $eCe$ is purely infinite.
\end{proof}

We consider the dual action on $C = B  \rtimes_\gamma \mathbb Z$ as a $2\pi$-periodic flow and we denote by $\theta$ the restriction of this flow to $eCe$.

\begin{lemma}\label{02-09-21d} The KMS bundle $(S^\theta,\pi^\theta)$ of $\theta$ is isomorphic to $(S,\pi)$.
\end{lemma}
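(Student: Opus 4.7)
The plan is to adapt the proof of Lemma~\ref{02-09-21dx} to the present noncompact setting, using Lemma~\ref{07-10-21g} in place of Lemma~\ref{27-08-21xxx} and the projection $e \in B$ in place of $p = 1\otimes e_{11}$. Given $(\omega,\beta)\in S^\theta$, I would first extend $\omega$ to the unique $\beta$-KMS weight $\widehat{\omega}$ on $C$ via Remark 3.3 of \cite{LN}; by Lemma 3.1 of \cite{Th3}, $\widehat{\omega}|_B$ is then a densely defined lower semicontinuous trace on $B$ with $\widehat{\omega}|_B\circ\gamma = e^{-\beta}\widehat{\omega}|_B$. Because $\widehat{\omega}(e) = \omega(e) = 1$ and $[e] = v$ in $K_0(B) = G_\sharp$, the induced positive homomorphism $(\widehat{\omega}|_B)_{*}$ sends $v$ to $1$ and intertwines $\alpha_\sharp$ with multiplication by $e^{-\beta}$. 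Lemma~\ref{07-10-21g} then produces a unique $\omega_0 \in \pi^{-1}(\beta)$ with $(\widehat{\omega}|_B)_{*}(h,g) = g(\omega_0)$ for all $(h,g)\in G_\sharp$; set $\xi(\omega,\beta) = \omega_0$, so that $\pi\circ\xi = \pi^\theta$.

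For surjectivity, given $s \in S$ with $\beta = \pi(s)$, the evaluation $(h,g)\mapsto g(s)$ is a positive homomorphism on $G_\sharp$ sending $v$ to $1$ and intertwining $\alpha_\sharp$ with $e^{-\beta}$. Since $B$ is AF and stable, every such positive homomorphism on $K_0(B)$ is induced by a unique densely defined lower semicontinuous trace $\rho_s$ on $B$, which automatically satisfies $\rho_s\circ\gamma = e^{-\beta}\rho_s$; Lemma 3.1 of \cite{Th3} then upgrades $\rho_s\circ P$ to a $\beta$-KMS weight on $C$ with $\rho_s\circ P(e) = (\rho_s)_{*}(v) = 1$, whose restriction to $eCe$ is a $\beta$-KMS state mapping to $s$ under $\xi$. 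Injectivity goes through exactly as in Lemma~\ref{02-09-21dx}: two preimages of a common $s$ share $\beta = \pi(s)$, their induced maps on $G_\sharp$ coincide, which forces $\widehat{\omega^1}|_B = \widehat{\omega^2}|_B$ by the AF uniqueness, Lemma 3.1 of \cite{Th3} propagates this to $\widehat{\omega^1} = \widehat{\omega^2}$, and restriction to $eCe$ gives $\omega^1 = \omega^2$. Affineness of $\xi$ on the fiber $\pi^{-1}(\beta)$ is automatic, since both $\omega\mapsto\widehat{\omega}|_B$ and the induced map on $K_0$ are linear and evaluation at a point of the Choquet simplex $\pi^{-1}(\beta)$ is affine.

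To finish, I would exhibit the explicit inverse
$$
\xi^{-1}(s) = \bigl(\rho_s\circ P|_{eCe},\ \pi(s)\bigr),
$$
and verify its continuity using that $s\mapsto g(s)$ is continuous on $S$ for each $g \in G$, that $G$ spans a norm-dense subspace of $\mathcal A_{\mathbb R}(S,\pi)$ by our choice of $\mathcal C$, and that weak-$*$ convergence of states on the unital $C^*$-algebra $eCe$ is determined by testing against any dense subset. The principal technical obstacle, by contrast with the compact situation of Lemma~\ref{02-09-21dx}, is the noncompactness of $S$: one cannot appeal to the compact-Hausdorff principle that a continuous bijection is a homeomorphism, and must instead argue directly that $\xi^{-1}$ is proper. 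This last point will follow from $\pi\circ\xi = \pi^\theta$ together with the properness of both bundles $(S,\pi)$ and $(S^\theta,\pi^\theta)$, since a compact set in $S^\theta$ lies over a compact subset of $\mathbb R$ and is therefore carried by $\xi^{-1}$ into a closed subset of a compact fiber-preimage in $S$; the resulting continuous proper bijection between locally compact Hausdorff spaces is automatically a homeomorphism, completing the proof.
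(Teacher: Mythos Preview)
Your approach is essentially the same as the paper's: build the bijection $\xi$ via Lemma~\ref{07-10-21g}, check surjectivity and injectivity using the AF trace-$K_0$ correspondence and Lemma~3.1 of \cite{Th3}, and then argue that $\xi^{-1}$ is continuous. The paper packages the bijection between $\beta$-KMS states and suitably scaled positive homomorphisms on $G_\sharp$ by citing Corollary~4.2 of \cite{ET}, whereas you unpack it directly from \cite{LN} and \cite{Th3}; this is only a difference in bookkeeping.

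One point deserves tightening. In your continuity sketch for $\xi^{-1}$ you invoke the density of $G$ in $\mathcal A_{\mathbb R}(S,\pi)$, but that fact lives on the function-space side and does not by itself produce a dense subset of $eCe$ on which to test weak-$*$ convergence. What is actually needed is the reduction the paper carries out: use that $\{ebu^ke : b\in B,\ k\in\mathbb Z\}$ spans a dense set in $eCe$, kill the $k\neq 0$ terms via $P(ebu^ke)=0$, approximate $ebe$ by linear combinations of projections in $eBe$ (this is where AF is used), and finally observe that $\rho_{s_n}(q) = \ev_{s_n}([q]) \to \ev_s([q]) = \rho_s(q)$ for any projection $q$. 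Your ingredient ``$s\mapsto g(s)$ is continuous for $g\in G$'' is exactly this last step, but the conditional-expectation and projection reductions should be made explicit.

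Your closing properness argument is a welcome addition: the paper asserts that ``since $\varphi$ is a bijection and $\pi\circ\varphi = \pi^\theta$ it suffices to show that $\varphi^{-1}$ is continuous'' without spelling out why, and your observation that properness of both bundles forces a continuous fiber-preserving bijection with continuous inverse to be a homeomorphism is precisely what justifies that sentence.
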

\begin{proof} Let $(\omega,\beta) \in S^\theta$. By Remark 3.3 in \cite{LN} there is a $\beta$-KMS weight $\hat{\omega}$ for the dual action on $C$ which extends $\omega$. By Corollary 4.2 of \cite{ET} the map
$$
\omega \mapsto \left(\hat{\omega}|_B\right)_*
$$
is an affine homeomorphism from the simplex of $\beta$-KMS states for $\theta$ onto the set of elements $\phi$ from $\Hom_+(G_\sharp,\mathbb R)$ such that $\phi \circ \alpha = e^{-\beta}\phi$ and $\phi(v) = 1$. By Lemma \ref{07-10-21g} there is a point $\omega'$ in $\pi^{-1}(\beta)$ such that $\left(\hat{\omega}|_B\right)_*(h,g) = g(\omega')$ for all $(h,g) \in G_\sharp$. Define $\varphi : S^\theta \to S$ by $\varphi(\omega,\beta) = \omega'$. Note that $\pi \circ \varphi = \pi^\theta$. To see that $\varphi$ is surjective, let $\mu \in S$. Define $\ev_\mu : G_\sharp \to \mathbb R$ by $\ev_\mu(h,g) = g(\mu)$. Then $\ev_\mu \in \Hom_+(G_\sharp,\mathbb R)$, $\ev_\mu \circ \alpha = e^{-\pi(\mu)}\ev_\mu$, and $\ev_\mu(v) = 1$. Using Corollary 4.2 of \cite{ET} we get a $\pi(\mu)$-KMS state $\omega$ for $\theta$ such that $\left(\hat{\omega}|_B\right)_* = \ev_\mu$. Then $\varphi(\omega,\pi(\mu)) = \mu$. To see that $\varphi$ is injective, consider $(\omega_i,\beta_i) \in S^{\theta}$ such that $\varphi(\omega_1,\beta_1) = \varphi(\omega_2,\beta_2)$. Then $\beta_1 = \pi((\varphi(\omega_1,\beta_1)) =\varphi(( \varphi(\omega_2,\beta_2)) = \beta_2$ and 
$$
\left(\widehat{\omega_1}|_B\right)_* = \left(\widehat{\omega_2}|_B\right)_* .
$$ 
Since $B$ is AF it follows that $\widehat{\omega_1}|_B = \widehat{\omega_2}|_B$; see Lemma 3.5 of \cite{Th3}. By Lemma 3.1 of \cite{Th3} it follows that $\widehat{\omega_1} = \widehat{\omega_2}$ and hence $\omega_1 =\widehat{\omega_1}|_{eCe} = \widehat{\omega_2}|_{eCe} = \omega_2$. 

It remains to show that $\varphi$ is a homeomorphism. But since $\varphi$ is a bijection and $\pi \circ \varphi = \pi^\theta$ it suffices to show that $\varphi^{-1}$ is continuous. Let therefore $\{\mu^n\}$ be a sequence in $S$ such that $\lim_{n \to \infty} \mu^n = \mu$ in $S$. Set $\beta_n = \pi(\mu^n)$ and note that $\lim_{n \to \infty} \beta_n = \beta$, where $\beta = \pi(\mu)$. It follows that $\lim_{n \to \infty} \ev_{\mu^n}(x) = \ev_\mu(x)$ for all $x \in G_\sharp$. Let $\tau^n$ and $\tau$ be the traces on $B$ determined by the conditions that ${\tau^n}_* = \ev_{\mu^n}$ and $\tau_* = \ev_\mu$. Then $\varphi^{-1}(\mu^n) = (\tau^n\circ P|_{eCe},\beta_n)$ and $\varphi^{-1}(\omega) = (\tau \circ P|_{eCe},\beta)$. It suffices therefore to show that 
\begin{equation}\label{07-12-21}
\lim_{n \to \infty} \tau^n\circ P(exe) = \tau \circ P(exe)
\end{equation} 
for all $x \in C$. Since $\tau^n\circ P(e) = \tau \circ P(e) =1$, the positive functionals on $C$ given by $x \mapsto \tau^n\circ P(exe)$ and $x \mapsto \tau\circ P(exe)$ are all of norm $\leq 1$. To establish \eqref{07-12-21} for all $x \in C$ it suffices therefore to check the equality for $x$ in a dense subset of $C$. If $u$ is the canonical unitary in the multiplier algebra of $C$ coming from the construction of $C$ as a crossed product, it suffices to show that $\lim_{n \to \infty} \tau^n\circ P(ebu^ke) = \tau \circ P(ebu^ke)$ for all $k \in \mathbb Z$ and all $b \in B$. Since $P(ebu^ke) = 0$ when $k \neq 0$ it suffices to consider the case $k = 0$; that is, it suffices to show that
$\lim_{n \to \infty} \tau^n (ebe) = \tau(ebe)$. On approximating $ebe$ by a sum of projections from $eBe$ it suffices to show that $\lim_{n \to \infty} \tau^n(p) = \tau(p)$
when $p$ is a projection in $eBe$. This holds because 
$$
\lim_{n \to \infty}\tau^n(p)  = \lim_{n \to \infty} \ev_{\mu^n}([p]) = \ev_\mu([p]) = \tau(p).
$$

\end{proof}

Theorem \ref{16-08-21cx} follows from Lemma \ref{02-09-21d} and Lemma \ref{07-10-21h}.

\end{document}